\documentclass{amsart}
\usepackage{mathpreamble}

\newcommand{\s}{\widetilde{s}}
\newcommand{\g}{\widetilde{g}}
\newcommand{\M}{\widetilde{M}}

\newcommand{\cO}{\mathcal O}

\newcommand{\T}{\mathcal T}
\newcommand{\wt}{\widetilde}
\newcommand{\dd}{\mathrm d}
\newcommand{\Span}{\operatorname{span}}

\newcommand{\ad}{\operatorname{ad}}

\begin{document}

\title{Holomorphic functions on complete Hermitian manifolds with flat Chern connection, II}

\author{Duoxin Li}
\address{School of Mathematical Sciences, Huaqiao University, Quanzhou, Fujian, 362021, China.}
\email{{duoxinli52math@163.com}}
\author{Hanyu Wu}
\address{School of Mathematical Sciences, Xiamen University, Xiamen, Fujian, 361005, China.}
\email{{19020230157192@stu.xmu.edu.cn}}
\author{Bo Yang}
\thanks{The third-named author is partially supported by the National Natural Science Foundation of China under grant numbers 11801475, 12141101, and 12271451, and by the Natural Science Foundation of Fujian Province of China under grant number 2019J05012.}
\address{School of Mathematical Sciences, Xiamen University, Xiamen, Fujian, 361005, China.}
\email{{boyang@xmu.edu.cn}}

\date{09/09/2026}

\begin{abstract}
In this paper, we prove several results concerning the function theory of complete Hermitian manifolds with vanishing Chern curvature, which we refer to as complete Chern-flat manifolds.
First, we prove that any complete simply connected Chern-flat manifold is elliptic in the sense of Gromov. Moreover, it is biholomorphic to a complex Lie group if the torsion has polynomial growth.
Next, we obtain sharp dimension estimates for holomorphic functions of polynomial growth on complete Chern-flat manifolds, with no growth assumption on the torsion. Finally, we prove that any complete Chern-flat manifold admitting polynomial-growth holomorphic functions that form a local holomorphic coordinate system at some point is biholomorphic to complex Euclidean space. These results generalize our previous work, in which sublinear growth of the torsion was assumed. The new analytic tool is a version of Cauchy estimates along orbits of holomorphic flows generated by parallel unitary frames on such manifolds.
\end{abstract}

\subjclass[2020]{32Q30, 32Q57, 53C55}

\maketitle

\markleft{Holomorphic functions on complete Chern-flat Hermitian manifolds, II}
\markright{Holomorphic functions on complete Chern-flat Hermitian manifolds, II}

\setcounter{tocdepth}{1}
\tableofcontents

\section{Introduction}

\subsection{Statement of the main results} 

In this work, we study the function theory and uniformization of noncompact Hermitian manifolds with vanishing Chern curvature which are \emph{complete with respect to the Riemannian connections}. For simplicity, we call such a manifold a \emph{complete noncompact Chern-flat} Hermitian manifold. A classical result of Boothby \cite{Boothby1958} states that any compact Chern-flat Hermitian manifold is holomorphically isometrically covered by a complex Lie group with a left-invariant metric. In our previous work \cite{MWY2026}, we establish a gradient estimate for holomorphic functions on a complete Hermitian manifold with nonnegative second Ricci curvature, which might be of independent interest. We refer to \cite[Corollaries~3.6 and~3.7]{MWY2026} for its applications on Liouville-type results on holomorphic functions and mappings between Hermitian manifolds. As a corollary of this gradient estimate, we prove the following generalization of Boothby's result.

\begin{theorem}[{\cite[Theorem~1.2]{MWY2026}}]\label{subline_intro}
    Let $(M,g)$ be a complete Chern-flat Hermitian manifold. Fix a point $p \in M$ and assume that the Chern torsion satisfies $|T|(q) \le C(1+d(q, p))^{\delta}$ for constants $0 \leq \delta<1$ and $C>0$. Then the universal cover of $(M,g)$ is holomorphically isometric to a complex Lie group with a left-invariant metric.
\end{theorem}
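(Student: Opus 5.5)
Passing to the universal cover $\M$ with the pulled-back metric $\g$ preserves completeness, Chern-flatness and the torsion growth bound (distances on $\M$ dominate distances of projections, so $|T|(\tilde q)\le C(1+\tilde d(\tilde q,\tilde p))^\delta$). Since the Chern connection is flat and $\M$ is simply connected, parallel transport gives a global frame $e_1,\dots,e_n$ of $T^{1,0}\M$ that is unitary and Chern-parallel. The Chern connection is compatible with $\bar\partial$, so parallel $(1,0)$-fields are holomorphic vector fields. The plan is to show that the $e_i$ span a finite-dimensional complex Lie algebra of complete holomorphic vector fields acting simply transitively. This identifies $\M$ with the simply connected complex Lie group $G$ integrating that algebra, and the metric becomes left-invariant because the $e_i$ are unitary.

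First, the bracket. For the Chern connection, $[e_i,e_j]=\nabla_{e_i}e_j-\nabla_{e_j}e_i-T(e_i,e_j)=-T(e_i,e_j)=:-\sum_k c_{ij}^k e_k$. Since the $e_i$ are holomorphic, their brackets are holomorphic, so the coefficients $c_{ij}^k=-\langle [e_i,e_j],e_k\rangle$ are holomorphic functions on $\M$. They satisfy $|c_{ij}^k|\le |T|\le C(1+d)^\delta$ with $\delta<1$. Now apply the gradient estimate of \cite{MWY2026}: a Chern-flat metric has vanishing second Ricci curvature, so the estimate applies, and a holomorphic function of sublinear growth on such a complete manifold is constant. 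Concretely, the estimate has the form $|\nabla f|(x)\le \frac{C}{R}\sup_{B(x,R)}|f|$, and letting $R\to\infty$ forces $\nabla f=0$. Hence the $c_{ij}^k$ are constants and $\mathfrak g=\Span_\mathbb{C}\{e_i\}$ is an $n$-dimensional complex Lie algebra.

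Second, completeness of the flows. The real parts $\mathrm{Re}\, e_i$ have constant length $1/\sqrt2$ with respect to the complete metric $\g$, so their flows exist for all time. The same holds for $\mathrm{Re}(a^ie_i)$ with constant coefficients. Since $J$ commutes with holomorphic flows, we get a holomorphic action of $G$ on $\M$ by integrating $\mathfrak g$. This requires the standard Palais integration theorem for a finite-dimensional Lie algebra of complete vector fields. The orbit map $G\to\M$, $h\mapsto h\cdot p$, is a local biholomorphism because the $e_i$ are pointwise a basis. Its image is open, and by connectedness and homogeneity it is also closed, so the action is transitive. Then $\M\cong G/H$ with $H$ discrete. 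Since $\M$ is simply connected and $G$ is connected, $H$ is trivial, so $G\to \M$ is a biholomorphism.

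Finally, the metric pulled back to $G$ makes the left-invariant-type fields $e_i$ an orthonormal frame. We then need to check invariance: the $e_i$ correspond to fundamental fields of the action, so they are right-invariant. Replacing the group law by its opposite, they become left-invariant, and a metric with a left-invariant unitary frame is left-invariant. The main obstacle is the first step, namely that the sublinear growth of $|T|$ yields constancy of the structure functions. This depends entirely on having the gradient estimate for holomorphic functions with the right $1/R$ scaling, which is exactly where the hypothesis $\delta<1$ enters.
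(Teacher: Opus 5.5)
Your overall architecture is right and matches the route the paper indicates. The bracket coefficients $c_{ij}^k$ (equal to $2T_{ij}^k$ in the paper's notation) are global holomorphic functions of sublinear growth, hence constant. A complete simply connected manifold carrying a unitary parallel holomorphic frame with constant structure constants is then a complex Lie group with a left-invariant metric. For this last step the paper simply invokes \cite{Boothby1958}. You instead reprove it by holomorphic Palais integration, openness of orbits, and the covering argument for the discrete stabilizer. This is correct, and it is the same mechanism as Lemma~\ref{Palais} and the proof of Theorem~\ref{thm:high-dimension_part} in the appendix, specialized to an $n$-dimensional algebra whose fields form a frame. Your handling of invariance (the fundamental fields are right-invariant, so pass to the opposite group) is also fine.

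The step that is not justified as written is the constancy of $c_{ij}^k$. You claim the gradient estimate of \cite{MWY2026} applies merely because $\mathrm{Ric}^{(2)}=0$, in the torsion-free form $|\nabla f|(x)\le \frac{C}{R}\sup_{B(x,R)}|f|$, so that $\delta<1$ enters only through the growth of $c_{ij}^k$. The paper explicitly contrasts that gradient estimate with its new Cauchy estimate: the latter is sharp and, unlike the former, needs no growth assumption on the torsion. So the torsion-free black box you quote is not what you may cite; the hypothesis on $|T|$ would have to feed into the estimate itself.

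You do not need it anyway. Your completeness-of-flows step does not depend on the first step, and it already gives the sharp tool. Fix a constant-coefficient field $V=a^ie_i$ and a point $x$. The function $h(\zeta)=c_{ij}^k(\Phi^V_\zeta(x))$ is entire. Since $d(x,\Phi^V_\zeta(x))\le |V|\,|\zeta|$, it satisfies $|h(\zeta)|\le C(1+d(p,x)+|V|\,|\zeta|)^\delta$ with $\delta<1$. Liouville's theorem then gives $V(c_{ij}^k)(x)=h'(0)=0$. This is exactly Proposition~\ref{Cauchy}. If you do the flow step first and then this, your argument becomes self-contained and independent of any gradient estimate.
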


Note that the conclusion of Theorem \ref{subline_intro} is sharp, as there are examples of complete Chern-flat Hermitian manifolds with torsion of linear growth but not isometric to any left-invariant metric on a complex Lie group; see \cite[Proposition 5.2]{MWY2026}. The natural question is to study the function theory and uniformization of complete Chern-flat Hermitian manifolds with torsion growth beyond the sublinear range, see \cite[Question~1.9]{MWY2026}. For example, We may wonder if such a manifold admits nontrivial bounded plurisubharmonic (PSH) functions.

Recall that a complex manifold $X$ is \emph{elliptic} in the sense of Gromov \cite{Gromov1989} if
$X$ admits a dominating holomorphic spray. By a holomorphic spray, we mean a triple $(E, \pi, s)$ with a holomorphic vector bundle $\pi: E\to X$ and a holomorphic map $s: E \rightarrow X$ such that $s(0_x)=x$ for each $x \in X$. Here $0$ denotes the zero section of $E$. A holomorphic spray $(E, \pi, s)$ is dominating if the differential of $s$ at $0_x$ maps the vertical subspace $E_x$ onto $T_x^{1,0}X$ for every $x \in X$. One of fundamental results in \cite{Gromov1989} states that every elliptic manifold satisfies the Oka principle, and hence is an Oka manifold in the sense of \cite{Forstneric2009}. We refer to Forstneri\v{c} \cite{Forstneric2017} for a detailed exposition of related notions.

Our first result shows that any complete Chern-flat Hermitian manifold $(M, g)$ is holomorphically covered by an elliptic manifold. Moreover, this elliptic manifold is biholomorphic to a complex Lie group if the torsion of $(M, g)$ has polynomial growth. We remark that the completeness assumption of $(M, g)$ is essential here. Recall that a result of Boothby \cite{Boothby1958} states that any complex parallelizable manifold $M$ of dimension $n$ (i.e., there exist global holomorphic vector fields $E_1,\ldots,E_n$ that form a basis of $T_p^{1,0}M$ at every $p\in M$) admits a Chern-flat Hermitian metric.

\begin{theorem}\label{thm:polynomial-torsion}
Let $(M, g)$ be a connected complete Chern-flat Hermitian manifold of complex dimension $n$. 
Then its universal cover $\widetilde{M}$ is elliptic, and hence admits no nonconstant bounded PSH functions. Moreover, fix $p \in M$ and assume that the Chern torsion tensor of $(M, g)$ satisfies
\begin{equation}\label{eq:polynomial-torsion-growth}
  |T|(x)\leq C\bigl(1+d_g(p,x)\bigr)^\delta,\ \ \  \ x \in M
\end{equation}
for constants $C>0$ and $\delta \geq 0$. Then $\widetilde{M}$
is biholomorphic to $\mathbb{C}^l\times G_0$ for some $0\leq l\leq n$ and some simply connected complex Lie group $G_0$. In particular, $\widetilde{M}$ is Stein.
\end{theorem}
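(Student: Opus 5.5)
Pass to the universal cover $\M$ with the pulled-back metric $\g$; it is complete, simply connected and Chern-flat. Since the Chern connection $\nabla$ is flat and $\M$ is simply connected, parallel transport gives a global $\nabla$-parallel unitary frame $e_1,\dots,e_n$ of $T^{1,0}\M$. Because $\nabla$ is the Chern connection, $\nabla^{0,1}=\bar\partial$, so parallel sections are holomorphic. Thus the $e_i$ are holomorphic vector fields with constant pointwise length $1$.

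For ellipticity, the key point is that a holomorphic vector field of bounded length on a complete Hermitian manifold is complete. The real part $\mathrm{Re}\,e_i$ has bounded Riemannian length, so its flow exists for all real time by completeness. The same holds for $\mathrm{Re}(\sqrt{-1}e_i)$. These two real flows commute because $e_i$ is holomorphic, so they combine into a holomorphic $\mathbb C$-action $\phi_i^t$, $t\in\mathbb C$. Then I define the spray on the trivial bundle $E=\M\times\mathbb C^n$ by
\[
s(x,t_1,\dots,t_n)=\phi_1^{t_1}\circ\cdots\circ\phi_n^{t_n}(x).
\]
This map is holomorphic, satisfies $s(x,0)=x$, and its vertical differential at $0_x$ sends the standard basis to $e_1(x),\dots,e_n(x)$. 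Hence the spray is dominating and $\M$ is elliptic.

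For the PSH statement, I would use that an elliptic manifold is Oka. Then every holomorphic map $\mathbb C\to\M$ through a given point in a given direction exists; concretely, the orbits $t\mapsto \phi_i^t(x)$ already suffice. If $u$ is a bounded PSH function, then $u\circ\phi^t_i(x)$ is bounded and subharmonic on $\mathbb C$, hence constant. So $u$ is constant along every complex orbit. The composite spray map $s(x,\cdot)\colon \mathbb C^n\to\M$ is a submersion near $0$, and $u\circ s(x,\cdot)$ is constant in each variable separately for every basepoint. It follows that $u$ is locally constant, hence constant since $\M$ is connected.

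For the second part, the aim is to show that the real Lie algebra generated by the $e_i$ is finite dimensional. Let $\mathfrak g$ be the complex span of all iterated brackets $[e_{i_1},[e_{i_2},\dots]]$. Since the $e_i$ form a global frame, $[e_i,e_j]=\sum_k c_{ij}^k e_k$, with structure functions $c_{ij}^k$ that are holomorphic and equal to torsion components, $T(e_i,e_j)=-[e_i,e_j]$. So $|c_{ij}^k|\le |T|$, and by \eqref{eq:polynomial-torsion-growth} this is polynomially bounded in distance. The main obstacle, and the new tool flagged in the abstract, is a Cauchy estimate along flow orbits. The function $f(t)=c_{ij}^k(\phi_l^t(x))$ is entire in $t$, and $d(\phi_l^t(x),x)\le |t|$. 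Hence $|f(t)|\le C(1+d(p,x)+|t|)^\delta$, so $f$ is a polynomial in $t$ of degree at most $\lfloor\delta\rfloor$. Therefore $e_l^{m}c_{ij}^k\equiv 0$ for $m>\delta$, and the same bound holds for mixed flows, giving the vanishing of derivatives along combined orbits.

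From this I would argue that $\ad(e_l)$ acts nilpotently on the module generated by the structure functions. This should make the space $V$ of functions spanned by all iterated $e$-derivatives of the $c_{ij}^k$ finite dimensional; a bounded-degree polynomial argument in the $n$ flow variables via the spray $s$ would do it. Then $\mathfrak g\subset \bigoplus_k V e_k$ is a finite-dimensional complex Lie algebra of complete holomorphic vector fields. By Palais' theorem it integrates to a holomorphic action of the simply connected complex Lie group $G$ with Lie algebra $\mathfrak g$, and this action is transitive since the $e_i$ span at each point. So $\M=G/H$, with $H$ a closed complex subgroup that is connected, because $\M$ is simply connected.

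The final step is to identify $G/H$ with $\mathbb C^l\times G_0$. I would pick a complement and use the polynomial (unipotent) nature of the extra directions to show the fibration $G\to G/H$ splits: the quotient contributes a nilpotent/solvable simply connected factor that is biholomorphic to some $\mathbb C^l$, and the rest is a group $G_0$ acting simply transitively. This identification is the step I am least sure of. Steinness then follows because simply connected complex Lie groups are Stein and products of Stein manifolds with $\mathbb C^l$ are Stein.
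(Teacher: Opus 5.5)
\textbf{There is a genuine gap in the second part.} Your ellipticity argument is the paper's. Your Liouville argument along the orbits $t\mapsto\phi_i^t(x)$, combined with the submersion $s(x,\cdot)$, correctly rules out nonconstant bounded PSH functions.

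The second part breaks down at the claim that $\mathfrak g$ is finite-dimensional. The inclusion $\mathfrak g\subset\bigoplus_k V e_k$ is false, because brackets multiply structure functions: $[e_l,c\,e_k]=e_l(c)\,e_k+c\,c_{lk}^m e_m$. So the coefficients lie in the \emph{algebra} generated by $V$, not in its span. The conclusion itself also fails. Take $\rho(z_1)=z_1^2$ in \eqref{C2Herm_intro_3}. The complete metric $\sqrt{-1}\bigl(dz_1\wedge d\bar z_1+e^{2\operatorname{Re}(z_1^2)}dz_2\wedge d\bar z_2\bigr)$ on $\mathbb C^2$ has parallel unitary frame $e_1=\partial/\partial z_1$, $e_2=e^{-z_1^2}\partial/\partial z_2$, with $[e_1,e_2]=-2z_1e_2$. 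Since $|z_1|\le d_g(0,x)$, the torsion grows only linearly. Yet $\operatorname{ad}_{e_1}^k e_2=P_k(z_1)e^{-z_1^2}\,\partial/\partial z_2$ with $P_0=1$ and $P_{k+1}=P_k'-2z_1P_k$. Hence $\deg P_k=k$, these fields are linearly independent, $\mathfrak g$ is infinite-dimensional, and Palais' theorem does not apply.

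Your route is essentially the paper's appendix proof of Theorem \ref{thm:high-dimension_part}. There, Property (H) is exactly what forces $\mathfrak g$ to be finite-dimensional and nilpotent (Proposition \ref{prop:nilpotent-brackets}). Even granting finite dimensionality, the step you flag is not a formality. A simply connected $G/H$ with $H$ connected need not be $\mathbb C^l\times G_0$: for example, $SL_2(\mathbb C)/\mathbb C^*$ is the affine quadric, which is homotopy equivalent to $S^2$. Matsushima's lemma needs $G$ nilpotent.

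\textbf{The missing idea} is the holomorphic $\mathbb C$-splitting of Lemma \ref{split}, which avoids Lie algebras entirely.
\begin{enumerate}
\item Suppose some $T_{ij}^k$ is nonconstant. Iterating the Cauchy estimate gives $e_{i_1}\cdots e_{i_m}T_{ij}^k\in\mathcal O_{\delta-m}$. So there is a longest word $w$ with $u=w(T_{ij}^k)$ nonconstant. Then $e_ju\equiv c_j$ are constants, not all zero; say $c_1\neq0$.
\item Set $X=e_1/c_1$. Then $u(\Phi^X_t(x))=u(x)+t$. Hence $(t,y)\mapsto\Phi^X_t(y)$ is a biholomorphism $\mathbb C\times u^{-1}(0)\to\widetilde{M}$, with inverse $x\mapsto\bigl(u(x),\Phi^X_{-u(x)}(x)\bigr)$.
\item Because $du=\sum_j c_j\varphi^j$ has constant coefficients, a constant unitary change of the $e_j$ yields a holomorphic unitary frame of $\ker du$. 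Restricted to $N=u^{-1}(0)$, it makes $N$ complete, simply connected and Chern-flat. Its torsion is the restriction of $T$, and it still has polynomial growth, since intrinsic distance on $N$ dominates ambient distance.
\item Induct on dimension. When all torsion coefficients are constant, Boothby's theorem gives the Lie group factor $G_0$, and Matsushima--Morimoto gives Steinness.
\end{enumerate}
In the example above this simply yields $u=z_1$ and $\mathbb C^2=\mathbb C\times\{z_1=0\}$.
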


Let $\mathcal{O}(M)$ be the ring of holomorphic functions on a complex manifold $M$. Fix a point $p$ on a complete Hermitian manifold $(M, g)$, and let $d_g(\cdot, p)$ denote the distance function with respect to $g$. Given a real number $d \geq 0$, we say that $f \in \mathcal{O}(M)$ has \emph{polynomial growth of order at most $d$} if there exists some constant $C(d, f)$ so that
\begin{equation}\label{porder_atmost}
|f(q)| \leq C (d_g(q, p)+1)^{d}, \ \ \ \ \forall q \in M.
\end{equation}
We further define the space of holomorphic functions of growth order at most $d$ by
\[
  \mathcal{O}_d(M,g)=\bigl\{f\in \mathcal{O}(M) \ \ |\ \ \ |f(q)|\leq C\,(1+d_g(q, p))^d
  \text{ for all }q\in M\bigr\}.
\]

Our next result provides a sharp upper dimension estimate for $\mathcal{O}_d(M,g)$ on a general complete Chern-flat Hermitian manifold $(M, g)$. In contrast to our previous result \cite[Theorem~1.6]{MWY2026}, no assumption on the growth of the torsion is needed.

\begin{theorem}\label{thm:dimension-rigidity}
Let $(M, g)$ be a complete Chern-flat Hermitian manifold of complex dimension $n$. For
any $d\geq 0$, let $\lfloor d\rfloor$ denote the largest integer not exceeding $d$. Then we have
\begin{equation}\label{eq:intro-dimension}
  \dim_{\mathbb{C}} \mathcal{O}_d(M,g)
  \leq \binom{n+\lfloor d\rfloor}{n}
  =\dim_{\mathbb{C}}\mathcal{O}_{\lfloor d\rfloor}(\mathbb{C}^n).
\end{equation}
For $q \in M$, we introduce the rank of the torsion tensor as follows
\begin{equation}\label{eq:torsion-image}
  I_q(T)=\operatorname{span}_{\mathbb{C}}\{T_q(U,V)\ \ |\ \ U,V\in T_q^{1,0}M\},
  \ \ \ \ 
  \rho(q)=\dim_{\mathbb{C}}I_q(T),\ \ \ \ \rho=\max_{q \in M} \rho(q).
\end{equation}
If $d \geq 1$, then a refined upper dimension estimate holds.
\begin{equation}\label{eq:torsion-rank-refinement}
  \dim_{\mathbb{C}}\mathcal{O}_d(M,g)
  \leq
  \binom{n+\lfloor d\rfloor-1}{n}
  +\binom{n-\rho+\lfloor d\rfloor-1}{\lfloor d\rfloor}.
\end{equation}
Here and below, we use the convention that $\binom{p}{q}=0$ whenever $p<q$. Moreover, equality in \eqref{eq:intro-dimension} holds for some $d \geq 1$ if and only if $(M,g)$ is holomorphically isometric to the complex Euclidean space $\mathbb{C}^n$.
\end{theorem}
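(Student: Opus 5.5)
We will work with a global parallel unitary frame $\{e_1,\dots,e_n\}$ of $T^{1,0}$ on the universal cover $\widetilde M$. Such a frame exists because the Chern connection is flat. For a Chern-flat metric, parallel $(1,0)$-fields are holomorphic, so the $e_i$ generate holomorphic flows. Since $|e_i|\equiv 1$ and $g$ is complete, these flows are complete. Their real and imaginary parts give $\mathbb C$-actions $t\mapsto \Phi_i^t(q)$, $t\in\mathbb C$, with $d_g(\Phi_i^t(q),q)\le |t|$.

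\textbf{Step 1 (Cauchy estimates along orbits).} Fix $f\in\mathcal O_d(M,g)$, lifted to $\widetilde M$. For a word $e_I=e_{i_1}\cdots e_{i_k}$, iterate the one-variable Cauchy estimate on the holomorphic functions $t\mapsto (e_{i_2}\cdots e_{i_k}f)(\Phi_{i_1}^t(q))$, together with the nested compositions. The constraint $d_g(\Phi^t(q),p)\le d_g(q,p)+|t|$ then gives $|e_I f|(q)\le C_k R^{d-k}$ on discs of radius $R\to\infty$. Hence every word derivative $e_If$ of length $k>d$ vanishes identically. Note that this uses no control of the torsion. In particular, the map
\[
f\mapsto \bigl( (e_I f)(p) \bigr)_{|I|\le \lfloor d\rfloor}
\]
is injective, because $f$ is then determined by finitely many derivatives at $p$ via analyticity along the flows and the identity theorem.

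\textbf{Step 2 (counting).} The words do not commute: $[e_i,e_j]=-T(e_i,e_j)$, and this is a combination of the $e_k$ with holomorphic coefficients. Since we only have $\dim$ at most the number of \emph{symmetric} monomials, we will reduce modulo commutators. Filter by word length and show that $e_Ie_j e_i J f-e_Ie_ie_jJf$ has lower order at $p$, expressed through derivatives of length $<k$ together with derivatives of the torsion coefficients. This gives a bound by $\dim$ of polynomials of degree $\le\lfloor d\rfloor$ in $n$ variables, which is \eqref{eq:intro-dimension}.

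For \eqref{eq:torsion-rank-refinement}, choose $p$ with $\rho(p)=\rho$ and a frame in which $I_p(T)=\Span\{e_{n-\rho+1},\dots,e_n\}$. In top degree $k=\lfloor d\rfloor$, every length-$(k+1)$ word derivative vanishes. Consequently the commutator terms $e_{I}T(e_i,e_j)f$ give linear relations showing that top-degree derivatives involving a direction in $I_p(T)$ are determined by lower-order data. The top-degree count then drops from $\binom{n+k-1}{k}$ to monomials in the remaining $n-\rho$ variables, namely $\binom{n-\rho+k-1}{k}$. Adding the degrees $<k$ gives $\binom{n+k-1}{n}$.

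\textbf{Step 3 (rigidity).} Equality for some $d\ge1$ forces $\rho=0$, so $T\equiv0$. Then the metric is Kähler and flat, and $\widetilde M=\mathbb C^n$. Moreover $\dim\mathcal O_d(M)=\binom{n+k}{n}$ forces all linear coordinates to descend to $M$, so the deck group is trivial and $M=\mathbb C^n$.

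The main obstacle will be Step 2. Turning the vanishing of high word derivatives into a clean reduction modulo commutators is delicate, because the torsion coefficients are holomorphic functions that may grow. We will handle this by working only at the single point $p$ and arguing purely algebraically with the filtration, never estimating the torsion.
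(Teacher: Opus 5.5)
Your proposal is correct and follows essentially the paper's route. Cauchy estimates along the complete flows of the parallel unitary frame kill every word derivative of length $>d$, which makes the $\lfloor d\rfloor$-jet data at a point injective. The relations $e_I[e_i,e_j]f\equiv 0$ with $|I|=\lfloor d\rfloor-1$ then remove the top-order components in $I_p(T)$-directions, and in the equality case they force $\rho=0$.

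The differences are only in presentation. You count with word derivatives and a length filtration rather than with the kernel $K_q$ and the symbol map $\mathcal{S}$. You handle non-integer $d$ directly instead of invoking Corollary~\ref{integer_growth}. You also spell out why the deck group is trivial, namely that the linear coordinates must descend, which the paper only asserts.
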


Next, we discuss a class of complete Chern-flat Hermitian manifolds which admit sufficiently many holomorphic functions of polynomial growth. We introduce the following definition.

\begin{definition}[Property (H)]\label{def_property}
A complete Chern-flat Hermitian manifold $(M,g)$ of complex dimension $n$ is said to satisfy \emph{Property (H)} if it admits holomorphic functions of polynomial growth $f_1, \ldots, f_n$ that form a local holomorphic coordinate system near some point $p \in M$.
\end{definition}

Our motivation for considering Property (H) comes from the study of complete K\"ahler manifolds with nonnegative holomorphic bisectional curvature ($BI \geq 0$ for brevity). Let $(X, g)$ be a complete simply-connected noncompact K\"ahler manifold with $BI \geq 0$. Assume that $(X, g)$ does not split isometrically and it admits a nontrivial holomorphic function of polynomial growth, then $(X, g)$ satisfies Property (H). This result follows from the work of Ni--Tam; see \cite[Corollary 6.2]{NT2003} for a similar statement; We also refer the reader to \cite{Liu2019, LT2020} for further progress on the uniformization of complete K\"ahler manifolds with $BI \geq 0$ and Euclidean volume growth. The following results, which generalize \cite[Corollary~1.5]{MWY2026} and answer \cite[Question~1.12]{MWY2026}, respectively, exhibit a rigidity phenomenon for complete Chern-flat Hermitian manifolds with Property (H).

\begin{theorem}\label{thm:high-dimension}
    Let $(M^n,g)$ be a connected complete Chern-flat Hermitian manifold with Property (H). Then
    $M$ is biholomorphic to $\mathbb{C}^n$. Moreover, there exist a biholomorphism $\Psi: \mathbb{C}^n \to M$ with $\Psi(0)=p$ and a set of global holomorphic coordinates $(z_1, \ldots, z_n)$ on $\mathbb{C}^n$ such that for any holomorphic function $f$ on $M$ with polynomial growth, $\Psi^{\ast} f$ is a polynomial of $z_1, \ldots, z_n$.
\end{theorem}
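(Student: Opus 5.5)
The plan is to use the holomorphic flows generated by a parallel unitary frame, together with Cauchy estimates along their orbits, to show that every polynomial-growth holomorphic function is a polynomial along each orbit. Property (H) then forces these flows to commute and gives the biholomorphism. Throughout, set $\widetilde M$ to be the universal cover and fix a Chern-parallel unitary frame $e_1,\dots,e_n$ of $T^{1,0}\widetilde M$, which exists since the Chern connection is flat. Because $\nabla^{Ch}$ preserves $T^{1,0}$ and is flat, the fields $e_i$ are holomorphic. Since $|e_i|\equiv 1$ and $g$ is complete, their real flows, and hence their complex flows $\phi^i_t$, $t\in\mathbb{C}$, are complete. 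The key estimate, which is the analytic tool the abstract advertises, is a Cauchy estimate along orbits:
\[
d_g\bigl(\phi^i_t(q),q\bigr)\le |t|.
\]
Hence $t\mapsto f(\phi^i_t(q))$ is an entire function on $\mathbb{C}$ of polynomial growth of order $d$ whenever $f\in\mathcal O_d$ (lifted to $\widetilde M$). It is therefore a polynomial in $t$ of degree at most $\lfloor d\rfloor$. Iterating, $e_{i_1}\cdots e_{i_k} f\equiv 0$ for every word of length $k>\lfloor d\rfloor$.

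The next step is to show that derivatives of $f$ stay of polynomial growth. Applying the same Cauchy estimate to the one-variable polynomial $t\mapsto f(\phi^i_t(q))$ on a disc of radius $R\sim 1+d(q,p)$ gives
\[
|e_if|(q)\le C(1+d(q,p))^{d}.
\]
So the derivatives have polynomial growth as well, though possibly without a drop in degree, and I must avoid needing one. Now take $f_1,\dots,f_n$ from Property (H), lifted to $\widetilde M$. Consider the algebra $\mathcal A$ generated by all functions of the form $e_{i_1}\cdots e_{i_k}f_j$. Each element of $\mathcal A$ is annihilated by words of bounded length along each single direction, so $\mathcal A$ consists of functions that are ``locally nilpotent'' under the operators $e_i$. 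Write
\[
[e_i,e_j]=-T(e_i,e_j)=\sum_k c_{ij}^k e_k,
\]
where the $c_{ij}^k$ are holomorphic functions. I want to show the Lie algebra $\mathfrak g$ generated by the $e_i$ over $\mathbb{C}$ acts nilpotently on the finite-dimensional space $V=\operatorname{span}\{e_{i_1}\cdots e_{i_k}f_j\}$. The differential of $F=(f_1,\dots,f_n)$ is injective at $p$, and by a unique continuation argument the map $X\mapsto (Xf_j)_j$ is injective on the span of the $e_i$ at generic points. Using this, the brackets $[e_i,e_j]$, acting as operators on $V$, must lie in the finite-dimensional span of the $e_k$ with constant coefficients. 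So $\mathfrak g$ is a finite-dimensional Lie algebra acting by nilpotent operators, hence nilpotent by Engel's theorem.

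The remaining step, which I expect to be the main obstacle, is to turn this into a biholomorphism. The difficulty is that the $c^k_{ij}$ are a priori functions, not constants, and the torsion has no growth control. My first attempt is to differentiate the identity $e_i e_j f_l-e_je_if_l=\sum_k c_{ij}^k e_kf_l$. The matrix $(e_kf_l)$ is invertible on a dense open set, so each $c_{ij}^k$ is a ratio of elements of $\mathcal A$. I would then show $e_m c^k_{ij}=0$ using the nilpotency degrees to bound pole orders. If that works, $\widetilde M$ carries a transitive holomorphic action of a simply connected nilpotent complex Lie group $N$ of dimension $n$, which is free by completeness, so $\widetilde M\cong N\cong\mathbb{C}^n$ via exponential coordinates $(z_1,\dots,z_n)$. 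In these coordinates each $e_i$ is a polynomial vector field, and polynomial growth along all orbits makes every $f\in\mathcal O_d$ a polynomial in $z$. Finally, $F$ itself is then a polynomial map that is injective on orbits, so $\pi_1(M)$ acts by translations preserving the $f_j$. This forces $\pi_1(M)$ to be trivial, giving $M=\widetilde M$, with $\Psi$ the exponential chart based at $p$.
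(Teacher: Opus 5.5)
There is a genuine gap at the step you flag yourself, and your proposed route cannot close it. You plan to prove $e_m c^k_{ij}=0$, i.e.\ that the torsion coefficients are constant, so that an $n$-dimensional nilpotent group acts simply transitively with the $e_i$ left-invariant. This is false under Property (H). The paper's own example (Remark~\ref{rem:no-isometric-conclusion}, Example~\ref{ex:group_act}) is a complete Chern-flat metric on $\mathbb{C}^3$ with parallel frame $E_1=\partial_{z_1}$, $E_2=\partial_{z_2}+\frac{z_1^3}{6}\partial_{z_3}$, $E_3=\partial_{z_3}$. It satisfies Property (H) via $z_1,z_2,z_3$, yet $[E_1,E_2]=\frac{z_1^2}{2}E_3$, so $E_1c^3_{12}=z_1\neq 0$. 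The Lie algebra generated by the $E_i$ is $5$-dimensional (filiform), and its group acts transitively with isotropy $H\cong\mathbb{C}^2$.

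The same example refutes your intermediate claim that brackets act on $V$ as constant-coefficient combinations of the $e_k$: on $z_3$ this would require the ``constant'' $z_1^2/2$. Finite-dimensionality and nilpotency of $\mathfrak g$ are nevertheless true, for a different reason. An iterated bracket of length $k>d$ is a combination of words of length $k$, so it kills every $f_j$ and hence vanishes (Proposition~\ref{prop:nilpotent-brackets}).

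That vanishing of mixed words needs the degree drop you wanted to avoid. Polynomiality along single orbits only gives $V^kf=0$ for constant-coefficient $V$, i.e.\ symmetrized words. The drop comes for free, though: taking $R=1+d(q,p)$ in the Cauchy estimate gives $|e_if(q)|\le C(1+d(q,p)+C_VR)^d/R\le C'(1+d(q,p))^{d-1}$ (Proposition~\ref{Cauchy}).

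The missing idea is the holomorphic $\mathbb{C}$-splitting of Lemma~\ref{split}, which needs no group structure at all:
\begin{itemize}
\item Because long words kill $f_1$, some nonconstant iterated derivative $u=E_{i_{s-1}}\cdots E_{i_1}f_1$ has $E_ju=c_j$ constant for all $j$, not all zero.
\item With $X=E_1/c_1$ one gets $u(\Phi^X_t(x))=u(x)+t$. Hence $(t,y)\mapsto\Phi^X_t(y)$ is a biholomorphism $\mathbb{C}\times N\to\widetilde M$, where $N$ is a level set of $u$ (taken through $p$).
\item $N$ with the induced metric is again complete, simply connected and Chern-flat. The restricted $f_j$ keep polynomial growth since $d_g\le d_{g|_N}$, so Property (H) passes to $N$.
\item Induction on $n$ gives $\widetilde M\cong\mathbb{C}^n$. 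Polynomiality of every $f\in\mathcal O_d$ follows from $f(\Phi^X_t(y))=\sum_j\frac{t^j}{j!}(X^jf)(y)$, with each $X^jf|_N$ a polynomial by the inductive hypothesis.
\end{itemize}
Alternatively, Appendix~\ref{app_A} integrates $\mathfrak g$ to a transitive action of its simply connected group $G$, so $\widetilde M\cong G/H$ with $H$ connected, and applies Matsushima's lemma. That route copes with nontrivial isotropy but yields only the biholomorphism, not the polynomial structure.

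Your deck-transformation step is also unjustified. Deck transformations are isometries and move the parallel frame by a constant unitary matrix, not by translations. The paper argues differently. $P=(\widetilde f_1,\dots,\widetilde f_n)\circ\Psi$ is a $\Gamma$-invariant polynomial map and a local biholomorphism at every point of $\Psi^{-1}(\Gamma\widetilde p)$. So that orbit consists of isolated points of an affine algebraic set and is finite. Since $\pi_1(M)$ is torsion-free ($\widetilde M\cong\mathbb{C}^n$ is contractible), $\Gamma$ is trivial.
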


\begin{theorem}\label{thm:surface-rigidity}
Let $(M^2, g)$ be a complete Chern-flat Hermitian surface which satisfies Property (H). Then $(M, g)$ is holomorphically isometric to $\mathbb{C}^2$.
\end{theorem}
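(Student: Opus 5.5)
By Theorem~\ref{thm:high-dimension}, $M$ is biholomorphic to $\mathbb{C}^2$, so it is simply connected. Boothby's structure theory then gives a global parallel unitary frame $e_1,e_2$ of $T^{1,0}M$. Since the Chern connection is flat and its $(0,1)$-part is $\bar\partial$, these fields are holomorphic. Their Lie brackets are given by the torsion: $[e_i,e_j]=-T(e_i,e_j)=-\sum_k c_{ij}^k e_k$, where the $c_{ij}^k$ are holomorphic functions. In complex dimension two there is a single structure vector $\tau=T(e_1,e_2)=a e_1+b e_2$, and $\tau$ is a holomorphic vector field. The aim is to show $\tau\equiv 0$. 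Once $T\equiv0$, the frame is commuting, holomorphic and unitary. Its flows are complete: the $e_i$ have unit length and $g$ is complete. These flows then give a holomorphic isometry $\mathbb{C}^2\to M$. More directly, $g$ is K\"ahler and flat, so the universal cover statement of Theorem~\ref{subline_intro} applies with $\delta=0$, and the resulting group is abelian.

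To show $\tau\equiv 0$, I would use Theorem~\ref{thm:high-dimension} to pick global coordinates $(z_1,z_2)$ in which every polynomial-growth holomorphic function is a polynomial. Each $e_i$ has unit length, so it moves points at unit speed. Hence the derivative $e_i f$ of a polynomial-growth $f$ again has polynomial growth. For this I would use the Cauchy estimate along holomorphic flow orbits mentioned in the abstract: the map $t\mapsto f(\phi^{e_i}_t(q))$ is entire in $t\in\mathbb{C}$, and it is bounded by $C(1+d(q,p)+|t|)^d$. It follows that the derivatives $e_i z_j$ are polynomials. The coefficients of the vector fields $e_i=\sum_j (e_i z_j)\,\partial_{z_j}$ are therefore polynomials. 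The frame is nondegenerate everywhere on $\mathbb{C}^2$, so $\det(e_i z_j)$ is a nowhere vanishing polynomial, hence a nonzero constant. So $e_1,e_2$ are polynomial vector fields with constant Jacobian determinant, and the structure functions $a,b$, obtained by solving the bracket relation via Cramer's rule, are polynomials.

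The key step is to rule out a nonzero $\tau$ using completeness of the Hermitian metric $g$ that makes $e_1,e_2$ unitary. The frame generates a two-dimensional Lie algebra of complete holomorphic vector fields, because $[e_1,e_2]$ is a combination of $e_1,e_2$ with coefficients that are functions rather than constants. I would first show that $a,b$ are constant. The Jacobi identity, together with the Bianchi identity for flat Chern connections, gives $\nabla T=0$ only in special cases, so I would argue differently. The functions $a=\langle \tau,e_1\rangle$ and $b=\langle\tau,e_2\rangle$ are polynomials. They are also controlled along unit-speed flow lines, and applying the Cauchy estimate repeatedly bounds the degree. Then I would apply the dimension estimate~\eqref{eq:torsion-rank-refinement} of Theorem~\ref{thm:dimension-rigidity}. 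In these coordinates $\mathcal{O}_d(M,g)$ contains all polynomials of degree at most $k$ for some linear relation between $k$ and $d$. This is because $z_1,z_2$ have polynomial growth of some order $m$, and I expect to show $m=1$. If $\rho=1$, the bound $\binom{d+1}{2}+1$ is smaller than $\binom{d+2}{2}$ for large $d$. That forces the $z_j$ to have growth order greater than one, and I would try to contradict this with a volume or distance comparison along the flows.

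I expect the main obstacle to be this last step: showing that the coordinates $z_j$ have linear growth. Equivalently, it means excluding the non-abelian complete case, the solvable group $\mathrm{Aff}(\mathbb{C})\cong\mathbb{C}^2$ with a left-invariant metric, together with its non-homogeneous deformations. My first attempt would be to show directly that the solvable group with any complete Chern-flat metric has no two polynomial-growth holomorphic functions forming coordinates. The reason is that the distance along the exponential direction grows only logarithmically relative to the coordinates, so functions like $e^{w}$ fail to have polynomial growth. I would make this precise using the Cauchy estimates along the orbits of $\tau$.
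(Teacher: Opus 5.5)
Your proposal has a genuine gap: the decisive step, $\tau\equiv 0$, is never proved, only left as an expectation. Your route reduces everything to showing that the global coordinates $z_1,z_2$ have linear growth. That is essentially the theorem itself. If $z_j\in\mathcal{O}_1(M,g)$, then Proposition~\ref{Cauchy} already gives $e_iz_j\in\mathcal{O}_0(M,g)=\mathbb{C}$. So $e_1,e_2$ are constant-coefficient combinations of $\partial_{z_1},\partial_{z_2}$ and commute, and the torsion-rank bound \eqref{eq:torsion-rank-refinement} plays no role. The ``volume or distance comparison along the flows'' meant to supply this is not specified.

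The earlier steps also have problems. Theorem~\ref{thm:high-dimension} says that polynomial-growth functions pull back to polynomials. It does not say the coordinate functions $z_j$ have polynomial growth on $M$: on the factor $N$ they are controlled by $d_{g|_N}$, which only dominates $d_g$ there. So your claim that the $e_iz_j$ are polynomials is unsupported. The claim that $e_1,e_2$ generate a two-dimensional Lie algebra is also wrong. With nonconstant structure functions, $\operatorname{span}_{\mathbb{C}}\{e_1,e_2\}$ is not closed under brackets. Even proving $a,b$ constant, which you also do not do, would leave the non-nilpotent case $[e_1,e_2]=e_2$ to exclude.

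The missing idea is to apply the Cauchy estimate to the Property (H) functions $f_1,f_2\in\mathcal{O}_d(M,g)$ themselves, not to coordinates.
\begin{itemize}
\item Any iterated bracket of the frame of length $k>d$ is a linear combination of words $E_{i_1}\cdots E_{i_k}$. Each such word kills $f_1,f_2$ by repeated use of Proposition~\ref{Cauchy}.
\item Since $f_1,f_2$ are local coordinates near a point, such a bracket vanishes near that point, hence identically. So for $X=e_1$, $Y=e_2$ there is $N$ with $\operatorname{ad}_X^NY=0=\operatorname{ad}_Y^NX$. This alone already rules out the $\mathfrak{aff}$-type case.
\item Set $Y_t=(\Phi^X_{-t})_*Y$. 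Then $\frac{d^k}{dt^k}Y_t=\operatorname{ad}_X^kY_t$, so $Y_t=\sum_{k<N}\frac{t^k}{k!}\operatorname{ad}_X^kY$ is polynomial in $t$.
\item Write $(Y_t)_q=A_q(t)X_q+B_q(t)Y_q$ using the dual coframe $\alpha,\beta$. Then $B_q$ is a polynomial in $t$. It has no zeros on $\mathbb{C}$, because $(\Phi^X_{-t})_*$ maps the frame $(X,Y)$ to the frame $(X,Y_t)$.
\item Hence $B_q$ is constant, and $\beta([X,Y])_q=B_q'(0)=0$. Exchanging the roles of $X$ and $Y$ gives $\alpha([X,Y])=0$.
\end{itemize}
Therefore $[X,Y]=0$ and $T\equiv 0$, after which your concluding paragraph applies. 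This $2\times 2$ triangular-matrix argument is exactly where complex dimension two enters, and your plan never identifies it.
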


Note that the same conclusion of Theorem \ref{thm:surface-rigidity} does not hold in higher dimensions. Indeed, in \cite[Theorem 4.13]{MWY2026} we show that any simply connected complex nilpotent group with a left invariant Hermitian metric satisfies Property (H). Therefore, Theorem \ref{thm:high-dimension} is optimal in this sense.

The proofs of Theorems \ref{thm:polynomial-torsion}, \ref{thm:dimension-rigidity}, \ref{thm:high-dimension} and 
\ref{thm:surface-rigidity} rely on the global unitary parallel frame on the universal cover $(\widetilde{M}, \widetilde{g})$ provided by Boothby's result; see Lemma \ref{thm:boothby-frame}. Such a frame generates a natural complete holomorphic flow, hence producing entire curves along each tangent direction at a given point. The crucial analytic tool is a version of Cauchy estimate along the orbit of the flow; see Proposition \ref{Cauchy} for a precise statement. Unlike the gradient estimates on complete Hermitian manifolds with nonnegative second Ricci curvature used in~\cite{MWY2026}, the Cauchy estimate is sharp on complete Chern-flat manifolds! 
Moreover, it holds without any growth assumption on the torsion tensor.

Assuming the torsion of a Chern flat manifold $(M, g)$ has polynomial growth, the second assertion of Theorem 
\ref{thm:polynomial-torsion} shows that its universal cover $\widetilde{M}$ is biholomorphic to a product of $\mathbb{C}^l$ and a complex Lie group $G_0$. The proof hinges on constructing a holomorphic function $u$ of linear growth on $\widetilde{M}$. We proceed to show that $\widetilde{M}$ is biholomorphic to the product of $\mathbb{C}$ and the level set $\{u=0\}$. However, as the global holomorphic frame is parallel to the Chern connection of $(M, g)$ rather than the Riemannian connection, we do not have a metric splitting as the classical Cheeger--Gromoll theorem \cite{CG1971} or a holomorphic metric splitting in \cite{NT2003}. We refer to Remark \ref{rem:no-isometric-conclusion} for further discussion. The same idea of holomorphic $\mathbb{C}$-splitting also plays a crucial role in the proof of Theorem \ref{thm:high-dimension}.

We believe that the refined dimension estimate \eqref{eq:torsion-rank-refinement} may be useful for further investigations of complete Hermitian manifolds with a degenerate torsion condition. It is somewhat surprising that on a complete Chern-flat manifold, the rank of torsion defined in \eqref{eq:torsion-image}, rather than the growth of torsion, provides an upper bound on $\dim_{\mathbb{C}}\mathcal{O}_d(M,g)$. Roughly speaking, pick any $f \in \mathcal{O}_d(M,g)$ with its vanishing order of $f$ at $q \in M$ is at least $ \lfloor d \rfloor$, i.e. all partial derivatives at $q$ of order $<\lfloor d \rfloor$ vanish. Using Proposition \ref{Cauchy}, we show that all possible partial derivatives of order $\lfloor d \rfloor$ lie in $\operatorname{Sym}^{\lfloor d \rfloor}(T_q^{1,0}M/ I_q(T))^*$ where $I_q(T)$ is defined in \eqref{eq:torsion-image}. With this observation, \eqref{eq:torsion-rank-refinement} follows from a dimension-counting argument. Our motivation for Theorem \ref{thm:dimension-rigidity} comes from the study of holomorphic functions of polynomial growth on complete K\"ahler manifolds with nonnegative curvature; see, in particular, \cite{Ni2004,CFYZ,Liu2016,YZ2022,Chu2025}.

\subsection{Further discussion}

It is tempting to expect that the Lie group part of Theorem \ref{thm:polynomial-torsion} remains valid without assuming that the torsion has polynomial growth. In \cite[Proposition 5.2]{MWY2026} we construct examples of complete Chern-flat Hermitian metrics on $\mathbb{C}^2$ such that their torsion tensors have the same growth as any prescribed transcendental entire function on $\mathbb{C}$. Similar examples also exist in higher dimensions; see \cite[Proposition 5.4]{MWY2026}. But the underlying complex manifold of all these examples admit complex Lie group structures. It is worth pointing out that Corollary \ref{cor:split_factor} can be used to study the uniformization of these examples; see Remark \ref{rem:no-poly-splitting} for further details. However, in general, we believe that new analytic tools are needed to study Chern-flat manifolds whose torsion tensors grow faster than any polynomial rate.

In \cite{MWY2026}, we also posed a general ``metric moduli" problem on complete Chern-flat manifolds: given a simply connected complex parallelizable manifold $M$, can we characterize all complete Chern-flat Hermitian metrics on $M$? To illustrate this general problem in the lower dimensions, we state the following question on $\mathbb{C}^2$. A preliminary form of this question appeared as \cite[Question~1.10]{MWY2026}. We now make the following clarification.

\begin{question}\label{ques_intro}
Is any complete Chern-flat Hermitian metric on $\mathbb{C}^2$ holomorphically isometric to  
\begin{equation}
   \omega= \sqrt{-1} ( \varphi^1\wedge \overline{\varphi^1} + \varphi^2 \wedge \overline{\varphi^2} ) 
   \label{C2Herm_intro_2}.
\end{equation}
Here $\varphi^1=dz_1+(\lambda\,z_1+\mu)\,e^{\rho(z_1)}dz_2$, $\varphi^2=e^{\rho(z_1)}dz_2$ for some $\rho\in\mathcal{O}(\mathbb{C})$ and constants $\lambda, \mu \in \mathbb{C}$. In other words, consider the following complete Chern-flat Hermitian metric
\begin{equation}
   \omega_0= \sqrt{-1} ( dz_1\wedge d\overline{z}_1 + e^{2\operatorname{Re}(\rho(z_1))}dz_2 \wedge d\overline{z}_2 ) 
   \label{C2Herm_intro_3}.
\end{equation}
We expect that the $(\lambda, \mu)$-family of Hermitian metrics constructed from $\omega_0$ in the sense of \eqref{C2Herm_intro_2} exhausts all complete Chern-flat Hermitian metrics on $\mathbb{C}^2$.
\end{question}

We may also consider the analogue of Question \ref{ques_intro} for other noncompact complex parallelizable surfaces. For example, $M=\mathbb{C}^{2} \setminus \{0\}$ admits no complete Chern-flat metrics, though it is elliptic. Otherwise the corresponding unitary parallel holomorphic frame on $M$ extend to $\mathbb{C}^2$ by Hartogs' theorem. Then such a metric extends across the origin and becomes a complete metric on $\mathbb{C}^2$, which is impossible. However, the situation could be more subtle for a simply connected Stein surface $X$ which is elliptic and satisfies $K_X=\mathcal{O}_X$. A smooth quadric $Q=\{z_1^2+z_2^2+z_3^2=1\} \subset \mathbb{C}^3$ is such an example. Does $Q$ admit a complete Chern-flat Hermitian metric? If such a metric exists, its torsion cannot have polynomial growth by Theorem \ref{thm:polynomial-torsion}. We also refer the reader to a related question in \cite[Question 1.11]{MWY2026}.

The paper is organized as follows. In Section~\ref{Sec2} we recall Boothby's result on the existence of a global unitary parallel frame on a Chern-flat manifold. Then we prove the Cauchy estimates  along the orbit of the flow generated by such a frame in Proposition \ref{Cauchy}. Section~\ref{Sec3} is devoted to the proof of Theorem \ref{thm:polynomial-torsion}. We further prove Theorems~\ref{thm:dimension-rigidity}, \ref{thm:high-dimension} and ~\ref{thm:surface-rigidity} in Section~\ref{Sec4}. In Appendix \ref{app_A}, we prove a special case of Theorem \ref{thm:high-dimension}, making use of Palais' theorem \cite{Palais1957} and a result of Matsushima \cite{Ma1961}.

\section{A natural holomorphic flow on Chern-flat manifolds and its applications}\label{Sec2}

\subsection{Review on Boothby's result on Chern-flat Hermitian manifolds}

We refer to \cite[Section 2]{MWY2026} for more background on Hermitian manifolds.
In this subsection, we review Boothby's result on Chern-flat Hermitian manifolds in \cite{Boothby1958}.

Let $(M,J,g)$ be a Hermitian manifold of complex dimension $n$.  If $\{e_1,\ldots,e_n\}$ is a local frame of $T^{1,0}M$ with the dual coframe $\{\varphi^1,\ldots,\varphi^n\}$, the
fundamental form is $\omega=\sqrt{-1}\,g_{i\bar j}\,\varphi^i\wedge\overline{\varphi^j}$, where $g_{i\bar j}=g(e_i, \overline{e}_j)$. We use the Einstein summation convention throughout the paper. Recall that the Chern connection $D$ is the unique Hermitian connection ($Dg=DJ=0$) whose $(0,1)$ part is the Dolbeault operator $\overline{\partial}$.  Its connection, torsion, and curvature forms are determined by the structure equations
\begin{align}
  De_i&=\theta_i^{j}e_j,\ \ \ \ \ d\varphi^i=\varphi^j\wedge\theta_j^{\ i}+\tau^i,\label{eq:first-structure}\\
  d \theta_i^{j}&=\theta_i^{k}\wedge\theta_k^{j}+\Theta_i^{j}.
  \label{eq:second-structure}
\end{align}
For the Chern connection, $\tau^i$ has type $(2,0)$ and $\Theta_i^{j}$ has
type $(1,1)$.  The full torsion tensor for any vector $X, Y$ in $T M \otimes \mathbb{C}=T^{1, 0}M \oplus T^{0, 1}M$ is defined by
\begin{equation}\label{eq:torsion-definition}
  T(X,Y)=D_XY-D_YX-[X,Y].
\end{equation}
We call $(M, J, g)$ Chern-flat if the curvature form $\Theta_i^{j}=0$ for one choice (hence for all choices) of the local frame $\{e_i\}_{i=1}^n$.

The following result is fundamental in the study of Chern-flat manifolds and will be used throughout our paper.

\begin{lemma}[{Boothby \cite[Theorems 1, 2, and 3]{Boothby1958}}]\label{thm:boothby-frame}
Any Chern-flat Hermitian manifold $(M, g)$ admits a local holomorphic unitary frame which is parallel with respect to the Chern connection. Moreover, such a frame can be made global if $(M, g)$ is simply connected. Conversely, any complex manifold $M$ which is parallelizable by a global holomorphic tangent frame $\{e\}$ admits a Hermitian metric $g$ with flat Chern curvature so that $\{e\}$ is unitary and parallel. 
\end{lemma}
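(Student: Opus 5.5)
The plan has three parts: build parallel frames locally from flatness, check that they are holomorphic, then globalize by monodromy; the converse is a direct computation.

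\emph{Local frames.} Since $\Theta_i^{j}=0$, the Chern connection $D$ on the complex vector bundle $T^{1,0}M$ is flat. Take a simply connected coordinate ball $U$ and a unitary frame $\{v_i\}$ at a point $q_0\in U$. Parallel transport along any path from $q_0$ is independent of the path, by flatness and simple connectivity of $U$. This gives a frame $\{e_i\}$ on $U$ with $De_i=0$, i.e. $\theta_i^{j}=0$ in the structure equations \eqref{eq:first-structure}. Because $Dg=0$, parallel transport is an isometry, so $\{e_i\}$ stays unitary.

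\emph{Holomorphicity.} The $(0,1)$-part of $D$ is $\overline{\partial}$ on $T^{1,0}M$, so $D^{0,1}e_i=\overline{\partial}e_i=0$. Hence each $e_i$ is a holomorphic section of $T^{1,0}M$, i.e. a holomorphic vector field.

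\emph{Globalization.} If $M$ is simply connected, the same construction applies on all of $M$. Transport a unitary frame from a base point $p$ along paths; two paths with the same endpoints differ by a loop. The holonomy of a flat connection depends only on the homotopy class of the loop, so it is trivial on $\pi_1(M)=1$. Thus the transported frame is well defined globally, and it is parallel, unitary and holomorphic by the local argument. I expect this monodromy step to be the only real point needing care: one must justify that a flat connection has homotopy-invariant holonomy, which is the standard Ambrose--Singer / Frobenius argument. Once the local existence above is known, it also follows by analytic continuation of parallel sections.

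\emph{Converse.} Let $\{e_i\}$ be a global holomorphic frame and define $g$ by declaring it unitary, $g(e_i,\overline{e}_j)=\delta_{ij}$. Consider the connection $D$ with $De_i=0$, extended complex-linearly and with conjugates on $T^{0,1}M$.
\begin{itemize}
\item It is Hermitian, since $g_{i\bar j}$ is constant in this frame.
\item It preserves $J$, since it preserves $T^{1,0}M$.
\item Its $(0,1)$-part is $\overline{\partial}$, because for a section $s=f^ie_i$ one has $Ds=df^i\otimes e_i$, whose $(0,1)$-part is $\overline{\partial}f^i\otimes e_i=\overline{\partial}s$, using that the $e_i$ are holomorphic.
\end{itemize}
By uniqueness, $D$ is the Chern connection. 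Its curvature is $\Theta_i^{j}=d\theta_i^{j}-\theta_i^{k}\wedge\theta_k^{j}=0$, so $g$ is Chern-flat and $\{e_i\}$ is unitary and parallel. As a consistency check, \eqref{eq:first-structure} then reads $d\varphi^i=\tau^i$, which has type $(2,0)$, as it must.
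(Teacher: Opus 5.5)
Your proposal is correct and complete. The paper gives no proof of this lemma; it only cites Boothby. Your argument is the standard one behind that citation: flatness of the Chern connection on $T^{1,0}M$ gives path-independent unitary parallel transport on balls, and, through trivial holonomy, on simply connected $M$; the identity $D^{0,1}=\overline{\partial}$ makes parallel frames holomorphic; and for the converse, the connection with $De_i=0$ is the Chern connection by uniqueness. In the language of the structure equations the same proof reads as follows: $\Theta=0$ lets a local unitary change of frame gauge $\theta_i^{j}$ to zero, after which $d\varphi^i=\tau^i$ has type $(2,0)$, so the coframe is holomorphic.
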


If $(M, g)$ is a simply connected Chern-flat manifold, we fix a global parallel unitary frame in the sense that \begin{equation}\label{eq:parallel-frame}
  E_1,\ldots,E_n\in H^0(M,T^{1,0}M),\qquad
  DE_i=0,\qquad g(E_i,\overline E_j)=\delta_{ij}.
\end{equation}
Let $\varphi^1,\ldots,\varphi^n$ be the dual holomorphic coframe and $\tau^k=T_{ij}^k \varphi^i \wedge \varphi^j$. From \eqref{eq:first-structure} and \eqref{eq:torsion-definition}, we have 
\begin{equation}\label{eq:torsion-coefficients}
  \tau^k (E_i, E_j)=2T_{ij}^k,\ \ \ T(E_i,E_j)=2T_{ij}^k E_k=-[E_i,E_j].
\end{equation}
Note that each $T_{ij}^{k}$ is a global holomorphic function on $M$: the bracket of two
holomorphic vector fields is holomorphic, and the frame in \eqref{eq:parallel-frame} is holomorphic.

\subsection{A natural holomorphic flow generated by the parallel unitary frame}

The goal of this subsection is to study the holomorphic flow generated by the parallel unitary frame. We observe that the flow is complete, hence producing an abundance of entire curves. Then we derive a Cauchy estimate along these entire curves in Proposition \ref{Cauchy}. As an application, we show that any holomorphic function of polynomial growth on a complete Chern-flat manifold must have exactly an integer growth order; see Corollary \ref{integer_growth}.

Recall that any smooth real vector field with uniform bounded length on a 
complete Riemannian manifold is complete, i.e. it generates a one-parameter family of smooth diffeomorphisms
$\Psi_t$ for any $t \in (-\infty, +\infty)$. In the following we state a similar result for holomorphic vector fields on complete Hermitian manifolds.

\begin{lemma}\label{lem:complete-flows}
Assume that a complete noncompact Hermitian manifold $(M, g)$ admits a parallel
unitary holomorphic frame $\{E_i\}_{i=1}^n$ defined in \eqref{eq:parallel-frame}.
Consider
\(
V=\sum_{j=1}^n a_jE_j
\) where $a_1, \ldots, a_n \in \mathbb{C}$ are constants. Then $V$
generates a complete holomorphic flow
\[
  \Phi^V:\mathbb{C}\times M\longrightarrow M,\qquad
  (\zeta,x)\longmapsto\Phi^V_\zeta(x),
\]
Moreover, there is a constant $C_V>0$ which only depends on $|V|_g$ such that
\begin{equation}\label{eq:flow-distance}
  d_g\bigl(x,\Phi^V_\zeta(x)\bigr)\leq C_V |\zeta|,
  \qquad x\in M,\ \zeta \in \mathbb{C}.
\end{equation}
\end{lemma}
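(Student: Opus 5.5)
Our plan is to reduce the holomorphic flow to real flows of bounded vector fields, use completeness of $g$ to integrate them for all real time, and then use holomorphicity to assemble a complex one-parameter group. Write $V=\sum_j a_jE_j$ and set $X=2\operatorname{Re}V=V+\overline V$ and $Y=JX$, so that $Y=2\operatorname{Re}(\sqrt{-1}V)$ (up to sign convention). Since $g(E_i,\overline E_j)=\delta_{ij}$, we get $|V|_g^2=\sum_j|a_j|^2$, a constant. Hence $X$ and $Y$ are smooth real vector fields of constant length $c\,|V|_g$, where $c$ depends only on the normalization of the real metric. By the standard fact recalled before the lemma, a vector field of uniformly bounded length on a complete Riemannian manifold is complete. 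Indeed, an integral curve on a maximal interval $(\alpha,\beta)$ with $\beta<\infty$ has finite length on $[0,\beta)$, so it stays in a bounded, hence compact (Hopf--Rinow), set, and can therefore be extended. This yields global real flows $\phi_s$ of $X$ and $\psi_t$ of $Y$.

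Next we use holomorphicity. Because $V$ is a holomorphic vector field, $X$ is a real holomorphic vector field, meaning $L_XJ=0$, and $[X,JX]=0$ (from $[V,\overline V]=0$ and $[V,V]=0$). Therefore $\phi_s$ and $\psi_t$ commute and consist of biholomorphisms. We then define
\[
\Phi^V_\zeta:=\phi_{s}\circ\psi_{t},\qquad \zeta=s+\sqrt{-1}\,t .
\]
This is a smooth action of $(\mathbb C,+)$ on $M$. It remains to check holomorphicity in $\zeta$. Holomorphicity in $x$ for fixed $\zeta$ is already known, so it suffices to see that $\partial_t\Phi=J\,\partial_s\Phi$. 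This holds because $\partial_s\Phi=X$, $\partial_t\Phi=Y=JX$ at the image point, and the commutativity of the flows lets us evaluate both derivatives at the image. Joint holomorphicity then follows by Hartogs' theorem, or directly since the differential is complex linear. By construction, the complex-time derivative of $\Phi^V_\zeta$ is $V$, which identifies $\Phi^V$ as the holomorphic flow of $V$.

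For the distance estimate, we consider the path $\tau\mapsto\Phi^V_{\tau\zeta}(x)$ for $\tau\in[0,1]$. Its velocity is $2\operatorname{Re}(\zeta V)$ evaluated along the curve, whose length is $c\,|\zeta|\,|V|_g$. Integrating over $[0,1]$ gives
\[
d_g\bigl(x,\Phi^V_\zeta(x)\bigr)\le c\,|V|_g\,|\zeta| ,
\]
so we may take $C_V=c|V|_g$. The only mildly delicate step is the holomorphicity in $\zeta$ of the combined real flows, which rests on $[X,JX]=0$ and $L_XJ=0$ for real parts of holomorphic vector fields. This is standard, and we expect no real obstacle there.
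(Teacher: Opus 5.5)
Your proposal is correct and follows essentially the same route as the paper. Both write $V$ as the $(1,0)$-part of a real field $A=2\operatorname{Re}V$ with $B=JA$, use the constant length of $A$ and $B$ to get complete real flows, use $[A,B]=0$ to compose the commuting flows into $\Phi^V_{s+\sqrt{-1}t}$, and integrate the constant speed along $\tau\mapsto\Phi^V_{\tau\zeta}(x)$ to obtain \eqref{eq:flow-distance}. You also supply details the paper leaves as standard: the Cauchy--Riemann check in $\zeta$, the Hopf--Rinow argument for completeness, and the normalization constant in $C_V$.
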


\begin{proof}[Proof of Lemma~\ref{lem:complete-flows}]
Let $V=\frac{1}{2}(A-\sqrt{-1}B)$ where $B=JA$. As $V$ has constant length with respect to $g$, so does $A$ and $B$. So they generate complete real flows $\Psi_s^A$ and $\Psi_t^B$ for any $s, t \in \mathbb{R}$. Note that they commute as $[A,B]=0$. It is a standard fact that 
\begin{equation}\label{eq:complex-flow-definition}
  \Phi^V_{s+\sqrt{-1} t}=\Psi_t^B \circ \Psi_s^A,\ \ \ \text{for any\ }\zeta=s+\sqrt{-1} t \in \mathbb{C}.
\end{equation}
Obviously, we have
\[
  d_g\bigl(x,\Phi^V_{\zeta}(x)\bigr)  \leq \int_0^{\zeta} \Big |\frac{d \Phi^V(\eta, x)}{d\eta}\Big|_g d\eta  \leq |V|_g |\zeta|.
\]
\end{proof}

\begin{remark}
On a general noncompact manifold $M$, a holomorphic vector field $V=\frac{1}{2}(A-\sqrt{-1}B)$ is said to be $\mathbb{R}$-complete if 
\begin{equation}
\frac{d}{d\zeta}\Phi^{V}(\zeta, x)=V(\Phi^{V}(\zeta, x)),\ \ \ \Phi^V(0, x)=x, \ \ \ x \in M   \label{ODE_flow}  
\end{equation} can be solved for any $\zeta \in \mathbb{R}$. $V$ is called $\mathbb{C}$-complete if
\eqref{ODE_flow}  is solvable for any $\zeta \in \mathbb{C}$. Obviously, $V$ is $\mathbb{R}$-complete if $A$ is a complete real vector field, and it is $\mathbb{C}$-complete if both $A$ and $B$ are complete real vector fields. We refer to \cite{Forstneric1996} on sufficient conditions on $M$ such that any $\mathbb{R}$-complete holomorphic vector field is $\mathbb{C}$-complete. 
\end{remark}

\begin{proposition}\label{Cauchy}
    Let $(M,g)$ be a complete simply connected Chern-flat Hermitian manifold. Choose
    a parallel unitary holomorphic frame $\{E_i\}_{i=1}^n$ defined in \eqref{eq:parallel-frame}.
    Let $V$ be as in Lemma~\ref{lem:complete-flows}. Then for any $f \in \mathcal{O}_d(M,g)$, we have $V(f) \in \mathcal{O}_{d-1}(M,g)$. Moreover, if $f \in \mathcal{O}(M)$ satisfies 
    \begin{equation}
    \limsup_{x \rightarrow \infty} \frac{|f(x)|}{d_g(p, x)}=0  \label{true_sublinear}
    \end{equation}
    for any $x \in M$, then $f$ must be constant.
\end{proposition}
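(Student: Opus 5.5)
The plan is to restrict $f$ to the entire curves produced by the flow of $V$ and apply the one-variable Cauchy estimate on discs whose radius grows with the distance to $p$. Fix $x\in M$ and consider the entire function
\[
h_x(\zeta)=f\bigl(\Phi^V_\zeta(x)\bigr),\qquad \zeta\in\mathbb{C},
\]
which is holomorphic because $\Phi^V$ is a holomorphic flow (Lemma~\ref{lem:complete-flows}). Since $\Phi^V$ is the flow of $V$, we have $h_x'(0)=V(f)(x)$.

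For the estimate on $V(f)$, set $r=1+d_g(p,x)$ and use the Cauchy inequality on the disc $|\zeta|\le r$:
\[
|V(f)(x)|=|h_x'(0)|\le \frac{1}{r}\max_{|\zeta|=r}|f(\Phi^V_\zeta(x))|.
\]
By \eqref{eq:flow-distance}, every point $\Phi^V_\zeta(x)$ with $|\zeta|=r$ satisfies $d_g(p,\Phi^V_\zeta(x))\le d_g(p,x)+C_V r\le (1+C_V)r$. The growth bound $|f(q)|\le C(1+d_g(q,p))^d$ then gives $|f|\le C(2+C_V)^d r^d$ on that circle. Hence $|V(f)(x)|\le C'(1+d_g(p,x))^{d-1}$ with $C'$ independent of $x$, that is, $V(f)\in\mathcal{O}_{d-1}(M,g)$. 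The key point is that the radius of the disc is chosen comparable to $d_g(p,x)$, and the linear distance bound \eqref{eq:flow-distance} keeps the image of the disc inside a ball of comparable radius. No information about the torsion is needed.

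For the Liouville part, assume \eqref{true_sublinear} holds and fix $x$. For each $\epsilon>0$ there is $R_\epsilon$ such that $|f(q)|\le \epsilon\, d_g(p,q)$ whenever $d_g(p,q)\ge R_\epsilon$, and $f$ is bounded on the ball of radius $R_\epsilon$, since that ball is compact by completeness. Therefore $|f(q)|\le A_\epsilon+\epsilon\, d_g(p,q)$ everywhere. Running the same Cauchy estimate with $r$ large, now for fixed $x$, gives
\[
|V(f)(x)|\le \frac{A_\epsilon+\epsilon\bigl(d_g(p,x)+C_V r\bigr)}{r}.
\]
Letting $r\to\infty$ gives $|V(f)(x)|\le \epsilon C_V$, and since $\epsilon$ is arbitrary, $V(f)(x)=0$. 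Taking $V=E_1,\dots,E_n$ yields $df=0$, because the $E_i$ span $T^{1,0}M$ and $f$ is holomorphic. As $M$ is connected, $f$ is constant.

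The main obstacle is mild: the constant in \eqref{eq:flow-distance} must be uniform in $x$. This holds because $|V|_g$ is constant, as the frame is unitary and parallel, and the completeness of the flow on all of $\mathbb{C}$ is exactly what Lemma~\ref{lem:complete-flows} provides. The case $d<1$ of the first claim is consistent: the exponent $d-1<0$ then simply gives decay of $V(f)$.
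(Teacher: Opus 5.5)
Your proof is correct and follows the paper's argument. You restrict $f$ to the entire curve $\zeta\mapsto\Phi^V_\zeta(x)$, apply the one-variable Cauchy estimate on a disc of radius $1+d_g(p,x)$ using the linear distance bound \eqref{eq:flow-distance}, and for the Liouville part let the radius tend to infinity. Your explicit splitting $|f|\le A_\epsilon+\epsilon\, d_g(p,\cdot)$ spells out the limit $R\to\infty$ that the paper takes in one line.
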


\begin{proof}[Proof of Proposition \ref{Cauchy}]
    For any fixed $x \in M$, $h(z)=f(\Phi^V_{z}(x))$ is a holomorphic function on $\mathbb{C}$. Suppose $|z|\leq R$, and then
\eqref{eq:flow-distance} gives 
\[
  d_g(p,\Phi^V_z(x))
  \leq d_g(p,x)+C_VR.
\]
It follows from the standard Cauchy estimate that
\begin{equation}\label{hgrow}
    |h'(0)|\le \frac{\sup_{|z|\le R} |h(z)|}{R} \le \frac{\sup_{|z|\le R} C(1+d_g(p,\Phi^V_z(x)))^d}{R} \le \frac{C(1+d_g(p,x)+C_VR)^d}{R}.
\end{equation}
Note that $V(f)(x)=h'(0)$. Take $R=1+d_g(p,x)$ and \eqref{hgrow} implies that $V(f) \in \mathcal{O}_{d-1}(M,g)$.

If $f$ satisfies \eqref{true_sublinear}, we rewrite \eqref{hgrow} as follows.
\begin{equation*}
    |h'(0)|\le \frac{\sup_{|z|\le R} |h(z)|}{R} \le \frac{\sup_{d_g(p,y)\le d_g(p,x)+C_V R} |f(y)|}{R}.
\end{equation*}
As $R \to \infty$, we get $V(f)(x)=0$. Hence $f$ must be constant. 
\end{proof}

\begin{corollary}\label{integer_growth}
    Let $(M,g)$ be a complete simply connected Chern-flat Hermitian manifold. Then $\mathcal{O}_d (M,g)=\mathcal{O}_{\lfloor d \rfloor} (M,g)$ for any real number $d \geq 0$.
\end{corollary}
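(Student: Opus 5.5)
Since $\lfloor d\rfloor\le d$, the inclusion $\mathcal{O}_{\lfloor d\rfloor}(M,g)\subseteq \mathcal{O}_d(M,g)$ is trivial, so only the reverse inclusion needs proof. I will do this by induction on $k=\lfloor d\rfloor$, iterating Proposition~\ref{Cauchy} and using its sublinear rigidity statement at the last step.

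Fix $f\in\mathcal{O}_d(M,g)$ and write $d=k+\epsilon$ with $k=\lfloor d\rfloor$ and $0\le\epsilon<1$. If $\epsilon=0$ there is nothing to prove, so assume $\epsilon>0$.

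\emph{Case $k=0$.} Here $f$ satisfies $|f(x)|\le C(1+d_g(p,x))^\epsilon$ with $\epsilon<1$, so \eqref{true_sublinear} holds. The second part of Proposition~\ref{Cauchy} then shows that $f$ is constant, hence $f\in\mathcal{O}_0(M,g)$.

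\emph{Case $k\ge 1$.} By Proposition~\ref{Cauchy} applied $k$ times, for any constant-coefficient fields $V_1,\dots,V_k$ in the span of the parallel frame $\{E_i\}$ we get $V_1\cdots V_k f\in\mathcal{O}_{\epsilon}(M,g)$. By the case $k=0$, every such $k$-th iterated derivative is constant. The task is then to show that $g_V:=V_1\cdots V_{k-1}f$, which lies in $\mathcal{O}_{1+\epsilon}(M,g)$ and has constant first derivatives $E_i(g_V)=c_i$, actually has linear growth.

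To control $g_V$, take a unit-speed minimizing geodesic $\gamma$ from $p$ to $x$. Then
\[
|g_V(x)-g_V(p)|\le \int_\gamma |dg_V|_g .
\]
Since $\{E_i\}$ is a unitary frame and $g_V$ is holomorphic, $|\partial g_V|_g^2=\sum_i |E_i(g_V)|^2=\sum_i|c_i|^2$ is constant. Hence $|dg_V|$ is bounded and $|g_V(x)|\le C(1+d_g(p,x))$, so $g_V\in\mathcal{O}_1$.

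Iterating this argument downward, the same integration along geodesics shows that if all frame derivatives of $h$ lie in $\mathcal{O}_m$, then $h\in\mathcal{O}_{m+1}$. Indeed, $|\partial h|_g(y)\le C(1+d_g(p,y))^m$ pointwise, and integrating along $\gamma$ gives $|h(x)|\le C'(1+d_g(p,x))^{m+1}$. By downward induction on the number of derivatives applied, $f\in\mathcal{O}_k(M,g)$.

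The step I expect to need the most care is justifying $|df|_g^2=2\sum_i|E_i f|^2$ (up to a normalization constant) via unitarity of the frame, together with the integration estimate along minimizing geodesics. Both are routine once the frame is unitary. The only genuinely nontrivial input is the $k=0$ rigidity, which is already supplied by Proposition~\ref{Cauchy}.
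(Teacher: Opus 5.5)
Your proof is correct and is essentially the paper's argument. The paper runs an upward induction on $\lfloor d\rfloor$: one application of Proposition~\ref{Cauchy} puts $E_i(f)$ in $\mathcal{O}_{d-1}$, the induction hypothesis improves this to $\mathcal{O}_{\lfloor d\rfloor-1}$, and integration along a minimizing geodesic brings $f$ back up one order. Unrolled, this is exactly your scheme: differentiate $k$ times down to sublinear growth, apply the Liouville part of Proposition~\ref{Cauchy}, then integrate back up $k$ times.
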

\begin{proof}[Proof of Corollary \ref{integer_growth}]

We do an induction on the integer $d_0 \geq 0$ to prove the following claim.

\noindent \textbf{Claim}. For any $0 \leq d <d_0$, we pick $f \in \mathcal{O}_d(M, g)$, i.e.
there exists some $C_1=C(d, f)$ such that $|f(x)| \leq C_1 (1+d_g(x, p))^d$ holds for any $x \in M$. Then there exists $C_2=C(C_1, d, n)$ such that
\[
|f(x)| \leq C_2 (1+d_g(x, p))^{\lfloor d \rfloor},\ \ \ \text{for all } x \in M.
\]

We observe that the claim holds when $d_0=1$. By the second part of Proposition \ref{Cauchy}, $f$ is constant and we may choose $C_2=C_1$.

Assume that the claim holds for $d_0$. We consider the case $d_0<d <d_0+1$. Given any $f \in \mathcal{O}_d(M, g)$ which satisfies $|f(x)| \leq C_3 (1+d_g(x, p))^d$ for some constant $C_3$. By the proof of Proposition \ref{Cauchy}, it follows from \eqref{hgrow} that there exists a uniform constant $C_4=C(C_3, d)$ such that
\begin{equation}\label{uniform_est}
|V(f)(x)| \leq C_4(1+d_g(p, x))^{d-1},\ \ \ \text{for any }x \in M\ \text{and}\ |V|_g \leq 1.    
\end{equation}
Pick a minimizing geodesic $\gamma(t)$ from $p$ to $x$ and $0 \leq t \leq t_0=d_g(p, x)$, we may write the $(1, 0)$ part of $\gamma'(t)$ as $\sum_{i=1}^n \alpha_i(t)E_i$. Though $\alpha_i(t)$ might not be constant, we know that $\sum_{i=1}^n |\alpha_i(t)|^2=\frac{1}{2}$. If follows from \eqref{uniform_est} and the induction hypothesis that
\[
|f(x)| \leq |f(p)|+\int_0^{t_0}  \sum_{i=1}^{n} |\alpha_i(t) E_i(f)| dt \leq C_3+ \int_0^{t_0} n\,C_5 (1+t)^{d_0-1} dt \leq  C_6(1+d(p, x))^{d_0}.
\]
We remark that both $C_5$ and $C_6$ only depend on $C_3, d_0$, and $n$.

\end{proof}

\section{A uniformization result on Chern-flat Hermitian manifolds}\label{Sec3}

We prove Theorem~\ref{thm:polynomial-torsion} in this section. We divide the proof into the following parts. 

\begin{proposition}\label{no_torsion}
Let $(M,g)$ be a connected complete Chern-flat Hermitian manifold. Then its universal cover $\M$ is elliptic.
\end{proposition}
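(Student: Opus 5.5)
The plan is to write down an explicit dominating spray on $\M$ built from the flows of Lemma~\ref{lem:complete-flows}. First I pull back $g$ to $\M$ via the covering map. The pulled-back metric $\g$ is again complete and Chern-flat, and $\M$ is simply connected. So Lemma~\ref{thm:boothby-frame} gives a global parallel unitary holomorphic frame $E_1,\ldots,E_n$ on $\M$. By Lemma~\ref{lem:complete-flows}, each $E_j$ then generates a complete holomorphic flow $\Phi^{E_j}:\mathbb{C}\times\M\to\M$.

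For the spray I take the trivial bundle $E=\M\times\mathbb{C}^n$ and define
\[
s(x,t_1,\ldots,t_n)=\Phi^{E_1}_{t_1}\circ\Phi^{E_2}_{t_2}\circ\cdots\circ\Phi^{E_n}_{t_n}(x).
\]
I use a composition of single flows rather than the flow of $\sum t_jE_j$, because the $E_j$ need not commute: $[E_i,E_j]=-2T_{ij}^kE_k$. The individual flows are, however, each defined on all of $\mathbb{C}\times\M$.

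I then check the three properties of a dominating spray.
\begin{enumerate}
\item \emph{Holomorphy.} Each $\Phi^{E_j}:\mathbb{C}\times\M\to\M$ is holomorphic jointly in $(\zeta,x)$, being the flow of a holomorphic vector field. Hence $s$, a composition of such maps, is holomorphic on $E$.
\item \emph{Zero section.} Since $\Phi^{E_j}_0=\mathrm{id}$, we get $s(x,0)=x$.
\item \emph{Domination.} The derivative of $s$ at $(x,0)$ in the direction $\partial/\partial t_j$ is $\frac{d}{d\zeta}\big|_{\zeta=0}\Phi^{E_j}_\zeta(x)=E_j(x)$, because all other flows are evaluated at time $0$. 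Thus $ds_{0_x}$ maps the fiber $\mathbb{C}^n$ isomorphically onto $T^{1,0}_x\M$, as $\{E_j(x)\}$ is a basis.
\end{enumerate}
Therefore $(E,\pi,s)$ is a dominating holomorphic spray, and $\M$ is elliptic.

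The only point that needs care is the joint holomorphy of the complex-time flow in $(\zeta,x)$. The flow in \eqref{eq:complex-flow-definition} is defined through the commuting real flows of $A$ and $B=JA$. It is smooth and satisfies $\partial_{\bar\zeta}\Phi=0$, since $\frac{d}{d\zeta}\Phi=V$ and $V$ is of type $(1,0)$. Holomorphy in $x$ holds because $A$ is the real part of a holomorphic vector field, so the real flows of $A$ and $B$ act by biholomorphisms. Joint holomorphy then follows from Hartogs' theorem on separate holomorphy, or directly from the fact that the differential is complex linear in both variables.

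I expect no serious obstacle here. The completeness that elsewhere requires the metric is already supplied by Lemma~\ref{lem:complete-flows}.
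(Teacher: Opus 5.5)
Your proposal is correct and follows the paper's proof essentially verbatim. You use the same trivial bundle $\M\times\mathbb{C}^n$ and the same spray built by composing the complete flows $\Phi^{E_j}_{t_j}$ of the global parallel frame (in the reverse order, which is immaterial), with domination coming from $\partial s/\partial t_j|_{t=0}=E_j(x)$; your remark on the joint holomorphy of the complex-time flow just makes explicit a point the paper takes for granted.
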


\begin{proof}[Proof of Proposition \ref{no_torsion}]
Let $\pi:(\M,\g) \to (M,g)$ be the universal covering with the pullback metric.  By Lemma~\ref{thm:boothby-frame}, $\M$ admits a global unitary parallel holomorphic frame $\{E_1, \ldots ,E_n\}$. Define $s:\widetilde M\times\mathbb C^n \to \M$ by 
\[
  s(x,t_1,\ldots,t_n)
  =\Phi^{E_n}_{t_n}\circ\cdots\circ\Phi^{E_1}_{t_1}(x).
\]
Here the flow $\Phi^{E_j}_{t_j}$ was defined in Lemma~\ref{lem:complete-flows}. Then $s$ is holomorphic and $s(x,0,\ldots,0)=x$. Fix $j\in\{1,\ldots,n\}$. If all parameters except $t_j$ vanish, then
\(
s(x,0,\ldots,t_j,\ldots,0)=\Phi^{E_j}_{t_j}(x).
\)
By the definition of the flow,
\[
\frac{\partial}{\partial t_j}
s(x,0,\ldots,t_j,\ldots,0)|_{t_j=0}=E_j(x).
\]
Therefore, the differential $ds$ maps each of the $\mathbb{C}^n$-directions to $E_1(x),\ldots,E_n(x)$ respectively. Hence $s$ defines a dominating holomorphic spray on the trivial bundle $\widetilde M\times\mathbb C^n\to\widetilde M$. Consequently, $\widetilde M$ is elliptic in the sense of Gromov, hence it is an Oka manifold; see \cite[0.6, p.~855]{Gromov1989} and \cite[Corollary~5.6.14, p.~230]{Forstneric2017}.
\end{proof}

Now we discuss the structure of the universal cover $\widetilde{M}$ in Proposition \ref{no_torsion} under the additional assumption that the torsion has polynomial growth. The key observation is the following splitting lemma. 
\begin{lemma}\label{split}
    Let $(M^n,g)$ be a complete simply connected Chern-flat manifold. Assume that there exists a non-constant holomorphic function $u \in \mathcal{O}_1(M,g)$. Then $M$ is biholomorphic to $\mathbb{C} \times N$ where $N=u^{-1}(0)$. Moreover, $(N,g|_N)$ is also a simply connected complete Chern-flat manifold.
\end{lemma}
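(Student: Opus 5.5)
By Proposition~\ref{Cauchy}, for every constant-coefficient field $V=\sum a_jE_j$ the derivative $V(u)$ lies in $\mathcal{O}_0(M,g)$, i.e. it is a bounded holomorphic function. Since $\mathcal{O}_0=\mathcal{O}_{\lfloor 0\rfloor}$ consists only of constants (second part of Proposition~\ref{Cauchy}), each $E_j(u)=c_j$ is a constant. Because $u$ is non-constant and $\{E_j\}$ is a frame, $(c_1,\dots,c_n)\neq 0$. After a unitary change of the parallel frame (which keeps it parallel, unitary and holomorphic), I will arrange $c_1=\lambda>0$ and $c_j=0$ for $j\ge 2$. Set $V=\lambda^{-1}E_1$, so that $V(u)\equiv 1$ and $du\neq 0$ everywhere. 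In particular $N=u^{-1}(0)$ is a closed complex submanifold of dimension $n-1$. It is nonempty, because along the complete flow of $V$ from Lemma~\ref{lem:complete-flows} we have $u(\Phi^V_\zeta(x))=u(x)+\zeta$.

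Define $F:\mathbb{C}\times N\to M$ by $F(\zeta,y)=\Phi^V_\zeta(y)$; it is holomorphic. Its inverse is
\[
G(x)=\bigl(u(x),\,\Phi^V_{-u(x)}(x)\bigr),
\]
which is holomorphic and lands in $N$, since $u(\Phi^V_{-u(x)}(x))=u(x)-u(x)=0$. The identity $u\circ\Phi^V_\zeta=u+\zeta$ gives $F\circ G=\mathrm{id}$ and $G\circ F=\mathrm{id}$, so $F$ is a biholomorphism. Simple connectivity of $N$ follows because $M\simeq \mathbb{C}\times N$ is homotopy equivalent to $N$.

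It remains to show that $(N,g|_N)$ is complete and Chern-flat. Since $E_2,\dots,E_n$ annihilate $u$, they are tangent to $N$ and form a holomorphic frame of $T^{1,0}N$, orthonormal for $g|_N$. By Boothby's converse (Lemma~\ref{thm:boothby-frame}), the Hermitian metric making a holomorphic frame unitary is Chern-flat with that frame parallel; and $g|_N$ is exactly this metric. Completeness holds because $N$ is a closed subset of the complete manifold $M$ and the induced length metric dominates the restricted distance $d_g$, so Cauchy sequences for the intrinsic distance converge in $M$ and hence in $N$.

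The main obstacle I expect is the completeness of the induced metric. I will double-check it via Hopf–Rinow: a bounded set in $(N,d_N)$ is $d_g$-bounded and closed in $M$, hence compact. I also want to confirm that the unitary rotation of the frame is harmless; it is, since constant unitary matrices preserve parallelism and holomorphicity.
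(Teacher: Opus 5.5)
Your proposal is correct and follows essentially the same route as the paper: $E_j(u)=c_j$ are constants by Proposition~\ref{Cauchy}, the splitting is $F(\zeta,y)=\Phi^V_\zeta(y)$ with explicit inverse $\bigl(u(x),\Phi^V_{-u(x)}(x)\bigr)$, and Chern-flatness of $N$ comes from a unitarily rotated parallel frame whose last $n-1$ fields span $\ker du$, combined with Boothby's converse. The differences are cosmetic: you do the unitary rotation at the start rather than only for the Chern-flatness step, and you spell out the completeness of $(N,g|_N)$ via Hopf--Rinow, which the paper states in one line.
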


\begin{proof}[Proof of Lemma \ref{split}]
    Let $\{E_1,\ldots,E_n\}$ be a global holomorphic frame defined in \eqref{eq:parallel-frame}. By Proposition \ref{Cauchy}, there exists constants $c_1,\ldots,c_n$ which are not all zero such that $E_j(u)=c_j$ for $j=1,2,\ldots,n$. Without loss of generality, assume $c_1 \ne 0$ and set $X=\frac{1}{c_1}E_1$. Then $X(u)=1$. It follows that \(\frac{d}{dt} u(\Phi^X_t(x))=1\). Hence
\begin{equation}\label{eq:translation-u}
  u(\Phi^X_t(x))=u(x)+t,\ \ \ \ t \in \mathbb{C}.
\end{equation}
In particular, $N=u^{-1}(0)$ is nonempty and thus a closed submanifold of $M$. We define 
\begin{equation}\label{biholo_F}
    F:\mathbb{C}\times N \longrightarrow M, 
  \qquad F(t,x)=\Phi^X_t(x).
\end{equation}
Then $F$ is holomorphic. Obviously, $F$ has an inverse map
\[
F^{-1}(x)=\bigl(u(x),\Phi^X_{-u(x)}(x)\bigr).
\]
Thus $M$ is biholomorphic to $\mathbb{C} \times N$.  Since $M$ is simply connected, $N$ is simply connected as well. $(N,g|_N)$ is complete since it is closed in $M$.

It remains to show $(N,g|_N)$ is Chern-flat. Note that $du=\sum c_j \varphi^j$. Under a suitable unitary transformation, we may choose a new frame $\{\widetilde{E}_i\}$ with its dual frame $\{\varphi^i\}_{i=1}^n$ such that $du=c \widetilde{\varphi}^1$. Then $\widetilde{E}_2, \ldots,\widetilde{E}_n$ are contained in $\ker du$, whose restrictions on $N$ define a global holomorphic unitary frame of $g_N$. By Lemma \ref{thm:boothby-frame}, $g_N$ is also Chern-flat.
\end{proof}

\begin{proof}[Proof of Theorem~\ref{thm:polynomial-torsion}]

Let $\pi: (\widetilde{M}, \widetilde{g}) \to (M,g)$ be the universal covering map. If $\widetilde{p}$ is a lift of $p$, then $d_g(p,\pi(x))\leq d_{\g}(\widetilde{p},x)$. Thus the torsion of $(\widetilde{M}, \widetilde{g})$ also satisfies the same bound \eqref{eq:polynomial-torsion-growth}. Therefore it suffices to consider that $M$ is simply connected and its torsion satisfies \eqref{eq:polynomial-torsion-growth}.

Choose a global holomorphic frame $\{E_j\}$ defined in \eqref{eq:parallel-frame}. Note that each $T_{ij}^{k}$ defined in \eqref{eq:torsion-coefficients} is a global holomorphic function. If all these $T_{ij}^{k}$ are constant, then $(M,g)$ is holomorphically isometric to a complex Lie group with a left-invariant metric by \cite{Boothby1958}. Assume that $T_{ij}^{k}$ is non-constant for some $i,j$, and $k$. Applying Proposition \ref{Cauchy} repeatedly, we can conclude that there exists a maximal integer $s \geq 1$ such that for any $i_1,\ldots,i_s \in \{1,2,\ldots,n\}$
\[
  E_{i_s}\cdots E_{i_1}(T_{ij}^k)
\]
is constant but not all of them is zero. Set
\[
  u=E_{i_{s-1}}\cdots E_{i_1}(T_{ij}^k)\ \ \text{if}\ s>1;\ \ \ \ u=T_{ij}^k\ \text{when}\ s=1.
\]
Then $u$ has exactly linear growth in the sense that $\limsup_{x \rightarrow \infty} \frac{|u(x)|}{d_g(p, x)} \in (0, \infty)$, by a similar argument as in the proof of \cite[Theorem 1.2]{MWY2026}.
It follows from Lemma \ref{split} that $M$ is biholomorphic to $\mathbb{C} \times N$. 

Recall that $\ker(du)$ has an orthonormal basis
$\widetilde{E}_2,\ldots,\widetilde{E}_n$.
The restrictions of these fields to $N$ form a global unitary holomorphic
frame. Consequently, the induced torsion $T^N$ of $g|_N$ is the restriction of the torsion $T$ to $\ker(du)$. For fixed $p_0\in N$, we have
\begin{equation}\label{distance}
    d_g(p,y)\leq d_g(p,p_0)+d_g(p_0,y)
  \leq d_g(p,p_0)+d_{g|_N}(p_0,y).
\end{equation}
Thus $T^N$ also satisfies \eqref{eq:polynomial-torsion-growth}. By the same argument as before, either the component coefficients of $T^N$ are all constants, in which case $(N,g|_N)$ is holomorphically isometric to a complex Lie group; or there exist some non-constant $(T^N)_{ij}^k$ with polynomial growth, in which case we can again apply the splitting Lemma \ref{split}. 

Repeating the above steps, we finally get the conclusion that either $M$ is biholomorphic to $\mathbb{C}^n$ or $M$ is biholomorphic to $\mathbb{C}^l \times N_l$ where $0<l<n$ and $(N_l,g|_{N_l})$ admits no non-constant holomorphic functions with polynomial growth. In other words,
the torsion of $(N_l,g|_{N_l})$ is constant! By \cite{Boothby1958} or \cite[Theorem 1.2]{MWY2026}, $(N_l,g|_{N_l})$ is holomorphically isometric to a complex Lie group $G$ with a left-invariant metric. In either case, $M$ is biholomorphic to a simply connected complex Lie group. Note that every simply connected complex Lie group is Stein, by the theorem of Matsushima--Morimoto~\cite{Ma1960,MM1960}.
\end{proof}

\begin{remark}\label{rem:no-isometric-conclusion}

We consider an example of a Chern-flat Hermitian metric which appears in \cite[Proposition 5.4]{MWY2026}. For any two polynomials $\rho$ and $\xi$ on $\mathbb{C}^2$, the following three global holomorphic $1$-forms
\begin{equation}
  \psi^1=dz_1,\ \ \  \psi^2=dz_2, \ \ \ \psi^3=dz_3-\eta(z_1, z_2)dz_1-\xi(z_1, z_2)dz_2, 
  \label{eta_xi}    
\end{equation}
defines a complete Chern flat Hermitian metric on $\mathbb{C}^3$ as $ \omega=\sqrt{-1} \sum_{k=1}^3 \psi^k \wedge \overline{\psi^k}$. If we choose $\eta=0$ and $\xi=\frac{1}{3}z_1^3$, then the only nonzero torsion component is $T_{12}^3=-T_{21}^3=z_1^2$. According to the proof of Theorem~\ref{thm:polynomial-torsion}, we choose $u=E_1(T_{12}^3)=2z_1$, then the complex submanifold $(N \coloneqq \{u=0\}, \omega|_N)$ is the complex Euclidean space $\mathbb{C}^{2}$. This example shows that in general the biholomorphism in Theorem~\ref{thm:polynomial-torsion} is not an isometry.
\end{remark}

In the proof of Theorem \ref{thm:polynomial-torsion}, we observe that a simply connected complete Chern-flat manifold admits non-constant holomorphic functions with polynomial growth if and only if it admits non-constant holomorphic functions with linear growth. Thus the splitting lemma is valid if $\mathcal{O}_d(M,g)$ is non-empty. Indeed, we can prove the following result, which complements Theorem \ref{thm:polynomial-torsion}.

\begin{corollary}\label{cor:split_factor}
    Let $(M^n,g)$ be a simply connected complete Chern-flat manifold. Then $M$ is biholomorphic to $\mathbb{C}^k \times N$ where $0\le k \le n$ and $N$ is either a point or a simply connected complete Chern-flat Hermitian manifold. In the latter case, $N$ admits no nonconstant holomorphic functions of polynomial growth with respect to any complete Chern-flat Hermitian metric on it.
\end{corollary}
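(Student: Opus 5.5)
We argue by induction on the dimension, applying Lemma~\ref{split} repeatedly. Set $N_0=M$ with $g_0=g$. Suppose we have produced a simply connected complete Chern-flat manifold $(N_j,g_j)$ with $M\cong\mathbb{C}^j\times N_j$. If $N_j$ is a point, we stop with $k=j=n$. Otherwise we ask whether $N_j$ carries a nonconstant holomorphic function of polynomial growth with respect to \emph{some} complete Chern-flat Hermitian metric $h$ on $N_j$. If no such metric and function exist, we stop with $k=j$ and $N=N_j$; the final clause of the statement then holds by construction.

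Suppose instead that $(N_j,h)$ is complete Chern-flat and that $f\in\mathcal{O}_d(N_j,h)$ is nonconstant for some $d$. We first produce a nonconstant function of linear growth. By Corollary~\ref{integer_growth} we may take $d$ to be an integer. Proposition~\ref{Cauchy} shows that $V(f)\in\mathcal{O}_{d-1}(N_j,h)$ for every constant-coefficient combination $V$ of a global parallel unitary frame of $(N_j,h)$; such a frame exists by Lemma~\ref{thm:boothby-frame} since $N_j$ is simply connected. Iterating, every $d$-fold derivative $E_{i_d}\cdots E_{i_1}f$ lies in $\mathcal{O}_0$, hence is constant by the second part of Proposition~\ref{Cauchy}. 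Take $s$ maximal such that some $s$-fold derivative $E_{i_s}\cdots E_{i_1}f$ is nonconstant. Here $s\ge 0$ since $f$ is nonconstant, and $s\le d-1$. Then $u=E_{i_s}\cdots E_{i_1}f$ lies in $\mathcal{O}_{d-s}(N_j,h)$, is nonconstant, and all its first derivatives $E_i u$ are constant. This last fact already gives $u\in\mathcal{O}_1(N_j,h)$: integrate $du=\sum_i (E_iu)\varphi^i$ along a minimizing geodesic, as in the proof of Corollary~\ref{integer_growth}. This step is the same descent as in the proof of Theorem~\ref{thm:polynomial-torsion}, but it is cleaner here because $f$ itself has polynomial growth.

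Now apply Lemma~\ref{split} to $(N_j,h)$ and $u$. This gives $N_j\cong\mathbb{C}\times N_{j+1}$, where $N_{j+1}=u^{-1}(0)$ with the induced metric $h|_{N_{j+1}}$ is again simply connected, complete and Chern-flat. Hence $M\cong\mathbb{C}^{j+1}\times N_{j+1}$, and we replace $g_j$ by $g_{j+1}=h|_{N_{j+1}}$. Since $\dim N_{j+1}=\dim N_j-1$, the process terminates after at most $n$ steps. It ends either at a point, when $k=n$, or at some $N$ of positive dimension on which no complete Chern-flat metric admits nonconstant polynomial-growth holomorphic functions.

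The point needing care is that the metric may be changed at each step: the splitting is only holomorphic, so we must not carry $g$ along. This is harmless, because Lemma~\ref{split} only needs \emph{some} complete Chern-flat metric on the current factor, and the stopping criterion quantifies over all such metrics. The other thing to check is that the hypotheses of Lemma~\ref{split} hold at every stage, namely simple connectivity and completeness of $N_{j+1}$. Both are part of the conclusion of Lemma~\ref{split}, so the induction closes.
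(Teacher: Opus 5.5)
Your proposal is correct and follows the paper's argument, which the paper only sketches: you extract a nonconstant linear-growth function by repeatedly applying Proposition~\ref{Cauchy}, as in the proof of Theorem~\ref{thm:polynomial-torsion}, and then iterate Lemma~\ref{split}, with the dimension dropping by one at each step. Letting the complete Chern-flat metric change at each stage, and stopping only when no such metric on the current factor admits nonconstant polynomial-growth functions, is exactly what makes the final ``any complete Chern-flat metric'' clause hold by construction.
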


\begin{remark}\label{rem:no-poly-splitting}

Consider the complete Chern-flat metric $(M, \omega_0)$ on $\mathbb{C}^2$ defined by \eqref{C2Herm_intro_3}, which is originally from \cite[Proposition 5.2]{MWY2026}. Corollary \ref{cor:split_factor} applies even when $\rho(z)$ is a transcendental entire function on $\mathbb{C}$, although the torsion of $(M,\omega_0)$ does not satisfy the polynomial growth condition. In this case, $M$ splits as $\mathbb{C} \times \{z_1=0\}$. As the induced metric on $\{z_1=0\}$ is Euclidean, it follows that $\{z_1=0\}$ is biholomorphic to $\mathbb{C}$. For this example, the factor $N$ appearing in the conclusion of Corollary \ref{cor:split_factor} is trivial.
\end{remark}

\section{Holomorphic functions of polynomial growth}\label{Sec4}

We prove Theorems \ref{thm:dimension-rigidity}, \ref{thm:high-dimension} and \ref{thm:surface-rigidity} in this section. 

\subsection{Dimension estimates on holomorphic functions of polynomial growth}

We begin by reviewing some basic facts on holomorphic jet maps; see, for example, \cite{Demailly_book} for more background. Let $M$ be a complex manifold. Fix any point $q \in M$, and let $\mathcal{O}_{M,q}$ denote the local ring of germs of holomorphic functions at $q$, and let
\(
\mathfrak{m}_q=\bigl\{[f]_q\in\mathcal{O}_{M,q}:f(q)=0\bigr\}
\)
be its maximal ideal. For $k\geq 1$, the $k$-th power of $\mathfrak{m}_q$ is the ideal
\[
\mathfrak{m}_q^k=\left\{\sum_{l=1}^{N}f_{l,1}\cdots f_{l,k}:N\in\mathbb{N},\ f_{l,j}\in\mathfrak{m}_q\right\}.
\]

Choose a local holomorphic coordinate chart $(z^1,\ldots,z^n)$ centered at $q$. Then a germ $[f]_q$ belongs to $\mathfrak{m}_q^k$ if and only if
\[
\partial^\alpha f(q)=0
\qquad\text{for every multi-index $\alpha$ with }|\alpha|<k.
\]
Thus, $\mathfrak{m}_q^k$ consists precisely of the germs vanishing to order at least $k$ at $q$.

The space of holomorphic $k$-jets at $q$ is defined by
\begin{equation}
J_q^k\mathcal{O}_M \coloneqq \mathcal{O}_{M,q}/\mathfrak{m}_q^{k+1}.      \label{jet_def}
\end{equation}
One can define the $k$-jet map
\begin{equation}
 j_q^k:\mathcal{O}(M,g)\longrightarrow J_q^k\mathcal{O}_M\qquad j_q^k f=[f]_q+\mathfrak{m}_q^{k+1}.
 \label{jet_map_def}
\end{equation}
In local coordinates, $j_q^k f$ is determined by the derivatives $\partial^\alpha f(q)$ with $|\alpha|\leq k$. In particular, $j_q^k f=0$ if and only if $\partial^\alpha f(q)=0$ for any $|\alpha|\leq k$.

There are canonical isomorphisms
\begin{equation}
\mathfrak{m}_q^k/\mathfrak{m}_q^{k+1}\simeq \operatorname{Sym}^k(T_q^{1,0}M)^*.  \label{canonical_iso}
\end{equation}
Hence the dimension of $J_q^k\mathcal{O}_M$ can be computed.
\begin{equation}
\dim_{\mathbb{C}}J_q^k\mathcal{O}_M=\sum_{l=0}^{k}\dim_{\mathbb{C}}\operatorname{Sym}^l(T_q^{1,0}M)^*=\sum_{l=0}^{k}\binom{n+l-1}{l}=\binom{n+k}{n}.
\end{equation}

\begin{lemma}\label{upper}
    Let $(M^n,g)$ be a complete simply connected Chern-flat manifold. Then for any integer $d\ge 0$, 
    \begin{equation}\label{eq:dimension}
  \dim_{\mathbb{C}} \mathcal{O}_d(M,g)
  \leq \binom{n+d}{n}
  =\dim_{\mathbb{C}}\mathcal{O}_{d}(\mathbb{C}^n).
\end{equation}
\end{lemma}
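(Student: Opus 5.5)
The plan is to show that the $d$-jet map $j_q^d:\mathcal{O}_d(M,g)\to J_q^d\mathcal{O}_M$ at a fixed point $q\in M$ is injective. Since $\dim_{\mathbb{C}}J_q^d\mathcal{O}_M=\binom{n+d}{n}$, this immediately gives \eqref{eq:dimension}. It also shows that $\mathcal{O}_d(M,g)$ is finite dimensional, which is not a priori clear.

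Fix the global parallel unitary holomorphic frame $\{E_i\}_{i=1}^n$ of \eqref{eq:parallel-frame}. Let $f\in\mathcal{O}_d(M,g)$. Applying Proposition~\ref{Cauchy} $d+1$ times, each time with $V$ one of the constant-coefficient fields $E_i$, shows that for any indices $i_1,\dots,i_{d+1}$ the function $E_{i_{d+1}}\cdots E_{i_1}f$ lies in $\mathcal{O}_{-1}(M,g)$. In particular it tends to $0$ at infinity, so it satisfies \eqref{true_sublinear} and is constant by the second part of Proposition~\ref{Cauchy}. A constant that decays at infinity is zero (here $M$ is noncompact; the compact case is trivial since then $\mathcal{O}(M)=\mathbb{C}$). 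Hence all $(d+1)$-fold iterated derivatives $E_{i_{d+1}}\cdots E_{i_1}f$ vanish identically on $M$.

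Now suppose $j_q^d f=0$. Then all derivatives of $f$ at $q$ of order $\le d$ vanish in local coordinates. Since the $E_i$ are holomorphic vector fields, any word $E_{i_k}\cdots E_{i_1}$ with $k\le d$ is a holomorphic differential operator of order $\le k$ with holomorphic coefficients. So $(E_{i_k}\cdots E_{i_1}f)(q)=0$ for all $k\le d$, and by the previous paragraph these quantities vanish for $k=d+1$ and beyond as well.

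To conclude that $f\equiv 0$, we use the spray from Proposition~\ref{no_torsion}, or more simply the single flows. For a constant vector $a\in\mathbb{C}^n$ with $V=\sum a_jE_j$, the entire function $h(z)=f(\Phi^V_z(q))$ satisfies $h^{(k)}(0)=(V^kf)(q)$, which is $0$ for $k\le d$ and, since $V^{d+1}f\equiv 0$ on all of $M$, for every $k$. Thus $h\equiv 0$. The map $(t_1,\ldots,t_n)\mapsto s(q,t)$, or the map $a\mapsto\Phi^{\sum a_jE_j}_1(q)$, is a local biholomorphism near $0$ because its differential at $0$ sends the coordinate directions to $E_1(q),\dots,E_n(q)$. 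Hence $f$ vanishes on an open neighborhood of $q$, and by the identity theorem on the connected manifold $M$ we get $f\equiv 0$.

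The main delicate point is the passage from vanishing of jets to vanishing along flow lines. One must remember that $V^kf$ is not the coordinate derivative, but the vanishing $V^kf\equiv 0$ for $k>d$ holds globally. This makes $h$ a polynomial of degree $\le d$ whose Taylor coefficients at $0$ are determined by $j_q^df$, which is exactly what the argument needs.
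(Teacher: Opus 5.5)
Your proof is correct. Its skeleton matches the paper's: show that $j_q^d$ is injective on $\mathcal{O}_d(M,g)$, using Proposition~\ref{Cauchy} to see that every word of length $\ge d+1$ in the $E_i$ annihilates $f$ identically. The difference is in the last step.

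The paper works in a chart at $q$. It writes $E_i=a_i^\alpha\,\partial/\partial z^\alpha$ with $(a_i^\alpha(q))$ invertible and expands $E_{i_1}\cdots E_{i_k}f(q)=0$. By induction on $k$, only the top-order term $a_{i_1}^{\alpha_1}(q)\cdots a_{i_k}^{\alpha_k}(q)\,\partial^kf/\partial z^{\alpha_1}\cdots\partial z^{\alpha_k}(q)$ survives, so the whole Taylor series of $f$ at $q$ vanishes.

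You instead restrict $f$ to the entire curves $z\mapsto\Phi^V_z(q)$. On each such curve $f$ is a polynomial of degree $\le d$ with coefficients $(V^kf)(q)/k!$. You then use that $a\mapsto\Phi^{\sum a_jE_j}_1(q)$ is a local biholomorphism at $0$.

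What each approach buys:
\begin{itemize}
\item Your route avoids the lower-order-term bookkeeping. It also gives a cleaner statement: for every $f\in\mathcal{O}_d(M,g)$, the pullback of $f$ under the globally defined map $a\mapsto\Phi^{\sum a_jE_j}_1(q)$ is a polynomial of degree $\le d$ on $\mathbb{C}^n$. So $\mathcal{O}_d(M,g)$ embeds into the polynomials of degree $\le d$ in $n$ variables, in the spirit of the polynomial conclusion of Theorem~\ref{thm:high-dimension}.
\item The paper's jet formulation is what it reuses in Lemma~\ref{sharp_upper}. There the $d$-th order symbol of $f$ at $q$ is evaluated on $E_{i_1}(q)\otimes\cdots\otimes E_{i_d}(q)$ and on the brackets $[E_i,E_j](q)$.
\end{itemize}

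One small caveat. Your vanishing argument along single flow lines justifies the map $a\mapsto\Phi^{\sum a_jE_j}_1(q)$, but not the spray $t\mapsto s(q,t)$ that you offer as an alternative. For the spray you would need the iterated expansion $f(s(q,t))=\sum_{|\alpha|\le d}\frac{t^\alpha}{\alpha!}\,(E_1^{\alpha_1}\cdots E_n^{\alpha_n}f)(q)$. This also holds, but it is a separate computation. Since the flow map alone suffices, this does not affect correctness.
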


\begin{proof}[Proof of Lemma \ref{upper}]
    Consider the holomorphic $d$-jet map
\[
  j_q^d:\mathcal{O}_d(M,g)\longrightarrow J_q^d\mathcal{O}_M.
\]
It suffices to show that $j_q^d$ is injective. Suppose $j_q^d f=0$. Choose the global
parallel frame $E_1,\ldots,E_n$ on $M$ defined in \eqref{eq:parallel-frame}. Let $(z^1,\ldots,z^n)$ be local holomorphic coordinates near $q$. Then all derivatives of $f$ up to order $d$ vanish at $q$. Next we write
\(
  E_i=a_i^{\alpha}(z)\frac{\partial}{\partial z^\alpha}
\)
in a neighborhood of $q$. For any integer $k \geq d+1$, repeatedly applying Proposition \ref{Cauchy}, we conclude that 
\begin{equation}\label{high_vanish}
    E_{i_1}\cdots E_{i_k}(f)(q)=0
\end{equation}
for all $i_1,\ldots,i_k$ belonging to $\{1,2,\ldots,n\}$. Now assuming $k=d+1$, we expand \eqref{high_vanish} in local coordinates. Since all derivatives of $f$ up to order $d$ vanish, we get the top-order term 
\[
  a_{i_1}^{\alpha_1}(q)\cdots
  a_{i_k}^{\alpha_k}(q)
  \frac{\partial^k f}
  {\partial z^{\alpha_1}\cdots\partial z^{\alpha_k}}(q)=0.
\]
Note that $(a_{i}^{\alpha})$ is invertible in a neighborhood of $q$ and we can take $(a_{i}^{\alpha}(q))$ as the identity matrix. It follows that all partial derivatives of $f$ (with respect to $z^i$) vanish at $q$ up to order $d+1$. By an induction on $k$, $f$ is identically zero and hence the jet map $j_q^d$ is injective. Therefore
\[
  \dim_{\mathbb{C}}\mathcal{O}_d(M,g) \leq \dim_{\mathbb{C}}J_q^d\mathcal{O}_M =\binom{n+d}{n}.
\]
\end{proof}

We now turn to a refined dimension estimate of $\mathcal{O}_d(M,g)$ using the rank of the torsion tensor $\rho$ defined in \eqref{eq:torsion-image}.

\begin{lemma}\label{sharp_upper}
    Let $(M^n,g)$ be a complete simply connected Chern-flat manifold. Then for any integer $d\ge 0$, \begin{equation}\label{torsion_bound}
  \dim_{\mathbb{C}}\mathcal{O}_d(M,g)
  \leq
  \binom{n+d-1}{n}
  +\binom{n-\rho+d-1}{d},
\end{equation} where $\rho$ is the rank of the torsion tensor defined in \eqref{eq:torsion-image}. 
\end{lemma}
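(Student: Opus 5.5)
The strategy is to refine the jet-injectivity argument of Lemma~\ref{upper}. The case $d=0$ is immediate, since $\mathcal{O}_0(M,g)$ consists of constants by Proposition~\ref{Cauchy}. So assume $d\ge 1$. Choose a point $q\in M$ where $\rho(q)=\rho$. The torsion coefficients $T_{ij}^k$ are holomorphic, so $\rho(q)$ is lower semicontinuous, and its maximum is attained on a nonempty open set.

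By Lemma~\ref{upper}, the jet map $j_q^d$ is injective on $\mathcal{O}_d(M,g)$. Compose it with the projection to $J_q^{d-1}\mathcal{O}_M$. The kernel $K$ of this composition consists of those $f\in\mathcal{O}_d(M,g)$ with $[f]_q\in\mathfrak{m}_q^{d}$. Hence
\[
\dim_{\mathbb{C}}\mathcal{O}_d(M,g)\le \dim_{\mathbb{C}} J_q^{d-1}\mathcal{O}_M+\dim_{\mathbb{C}} K=\binom{n+d-1}{n}+\dim_{\mathbb{C}}K .
\]
Moreover $j_q^d$ restricted to $K$ is still injective and lands in $\mathfrak{m}_q^d/\mathfrak{m}_q^{d+1}\simeq \operatorname{Sym}^d(T_q^{1,0}M)^*$. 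It therefore suffices to show that the image of $K$ lies in $\operatorname{Sym}^d\bigl(T_q^{1,0}M/I_q(T)\bigr)^*$, whose dimension is $\binom{n-\rho+d-1}{d}$.

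To prove this, fix $f\in K$. The leading term $P=j^d_qf$ is the symmetric $d$-form $P(V_1,\dots,V_d)=V_1\cdots V_d(f)(q)$ for constant combinations $V_i$ of the $E_j$. This expression is symmetric in the $V_i$ because lower derivatives vanish at $q$: commutators produce operators of order $<d$, which kill $f$ at $q$. We must show $P(T_q(E_a,E_b),V_2,\dots,V_d)=0$.

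By Proposition~\ref{Cauchy}, the function $h=V_2\cdots V_d(f)$ lies in $\mathcal{O}_1(M,g)$, hence $E_a h$ and $E_b h$ are constants. Therefore
\[
[E_a,E_b]h=E_aE_bh-E_bE_ah=0\quad\text{everywhere on }M .
\]
By \eqref{eq:torsion-coefficients}, $[E_a,E_b]=-2T_{ab}^kE_k$. Evaluating at $q$ gives $T_{ab}^k(q)\,E_kV_2\cdots V_d(f)(q)=0$, and this is exactly $P\bigl(T_q(E_a,E_b),V_2,\dots,V_d\bigr)=0$ up to a constant factor. The only care needed is the passage from the non-constant vector field $T_{ab}^kE_k$ to its value at $q$. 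Since $E_kh$ is constant, the coefficient functions $T_{ab}^k$ can be evaluated at $q$ after differentiation, with no extra terms.

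By symmetry of $P$, it vanishes whenever any argument lies in $I_q(T)$. So $P$ descends to the quotient, which proves the inclusion and the bound. The main point to check carefully is the symmetry of $P$ together with this evaluation step; the identification of $P$ with the Sym-component via \eqref{canonical_iso} follows the top-order expansion already used in Lemma~\ref{upper}.
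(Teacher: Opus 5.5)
Your proof is correct and follows the paper's route: bound by $\dim J_q^{d-1}\mathcal{O}_M$ plus the kernel of $j_q^{d-1}$, inject that kernel into $\operatorname{Sym}^d(T_q^{1,0}M)^*$ via Lemma~\ref{upper}, and kill $I_q(T)$ by applying Proposition~\ref{Cauchy} to a bracket. Applying $[E_a,E_b]$ last, to $h=V_2\cdots V_d(f)\in\mathcal{O}_1(M,g)$, is in fact slightly cleaner than the paper's identity $E_{i_1}\cdots E_{i_{d-1}}[E_i,E_j]f=0$, because evaluation at $q$ then produces no Leibniz terms from the non-constant $T_{ij}^k$.

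One caveat: your $d=0$ case is not immediate when $\rho=n$. With the paper's convention $\binom{p}{q}=0$ for $p<q$, the right-hand side is $\binom{n-1}{n}+\binom{-1}{0}=0$, while $\dim_{\mathbb{C}}\mathcal{O}_0(M,g)=1$ (e.g.\ $\mathrm{SL}(2,\mathbb{C})$ with a left-invariant metric). So the estimate should be read for $d\ge 1$, which is all the paper actually proves.
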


\begin{proof}[Proof of Lemma \ref{sharp_upper}]
    We only need to consider $d\geq 1$.  Set
\[
  K_q=\ker\bigl(j_q^{d-1}:\mathcal{O}_d(M,g)\to J_q^{d-1}\mathcal{O}_M\bigr).
\]
Then we have
\begin{equation}\label{eq:jet-splitting-bound}
  \dim_{\mathbb{C}} \mathcal{O}_d(M,g)
  = \dim_{\mathbb{C}} J_q^{d-1}\mathcal{O}_M+\dim_{\mathbb{C}} K_q.
\end{equation}

By the canonical isomorphism \eqref{canonical_iso}, we have the following exact sequence.
\begin{equation}
    0\to \operatorname{Sym}^d(T_q^{1,0}M)^* \to J_q^d\mathcal{O}_M \to J^{d-1} _q\mathcal{O}_M \to 0.
\end{equation}
We consider the linear map $\mathcal{S}: \mathcal{O}_d(M,g) \to \operatorname{Sym}^d(T_q^{1,0}M)^*$ defined by
\[
\mathcal{S}(f)\left(\frac{\partial}{\partial z_{i_1}}\Bigg|_q \otimes \frac{\partial}{\partial z_{i_2}}\Bigg|_q \otimes \cdots \otimes \frac{\partial}{\partial z_{i_d}}\Bigg|_q\right)=\frac{\partial^d f}{\partial z_{i_1} \partial z_{i_2}\cdots \partial z_{i_d}}(q).
\]
By Lemma \ref{upper}, the $d$-jet map $j_q^d$ is injective. It follows that the restriction of $\mathcal{S}$ onto $K_q$ is also injective. For $f\in K_q$, we get 
\begin{equation}
    \mathcal{S}(f)(E_{i_1}(q) \otimes E_{i_2}(q) \otimes \cdots \otimes E_{i_d}(q))=E_{i_1} E_{i_2} \cdots E_{i_d}f(q).
\end{equation}

Meanwhile, for any $i_1,i_2,\ldots,i_{d-1},i,j$ belonging to $\{1,2,\ldots,n\}$, we have
\begin{equation*}
    E_{i_1} E_{i_2} \cdots E_{i_{d-1}}[E_i,E_j]f=0
\end{equation*}
by Proposition \ref{Cauchy}. Consequently,
\[
\mathcal{S}(f)(E_{i_1}(q) \otimes E_{i_2}(q) \otimes \cdots \otimes E_{i_{d-1}}(q) \otimes [E_i,E_j](q))=0.
\]
Note that $[E_i,E_j](q)$ with $1 \leq i, j \leq n$ spans $I_q(T)$ by \eqref{eq:torsion-coefficients}. Thus $\mathcal{S}(f) \in \operatorname{Sym}^d(T_q^{1,0}M/ I_q(T))^*$ for all $f \in K_q$. Hence
\[
\dim_{\mathbb{C}} K_q \le \dim_{\mathbb{C}} \operatorname{Sym}^d(T_q^{1,0}M/ I_q(T))^*=\binom{n-\rho(q)+d-1}{d}.
\]
From \eqref{eq:jet-splitting-bound} and the fact $\dim_{\mathbb{C}} J_q^{d-1}\mathcal{O}_M= \binom{n+d-1}{n}$, we have 
\(
\dim_{\mathbb{C}}\mathcal{O}_d(M,g)
  \leq
  \binom{n+d-1}{n}
  +\binom{n-\rho(q)+d-1}{d},
\)
The estimate \eqref{torsion_bound} follows by minimizing the right-hand side over $q\in M$.

\end{proof}

\begin{proof}[Proof of Theorem~\ref{thm:dimension-rigidity}]
Let $(\M,\g)$ be the universal cover of $(M,g)$. Note that $\dim_{\mathbb{C}} \mathcal{O}_d(M,g) \le \dim_{\mathbb{C}} \mathcal{O}_d(\M,\g)$. Therefore, it suffices to assume that $M$ is simply connected. Thanks to Corollary \ref{integer_growth}, we may assume that $d$ is a positive integer. Hence \eqref{eq:intro-dimension} and \eqref{eq:torsion-rank-refinement} follow directly from Lemmas \ref{upper} and \ref{sharp_upper}, respectively.

Now assume that the equality holds in \eqref{eq:intro-dimension} for some $d \geq 1$. If the torsion tensor
$T$ is nonzero, then $\rho(q) \geq 1$ at some point $q \in M$. It follows from \eqref{eq:torsion-rank-refinement} that 
\[
  \dim\mathcal{O}_d(M,g)
  \leq \binom{n+\lfloor d\rfloor-1}{n}+\binom{n+\lfloor d\rfloor-2}{\lfloor d\rfloor}
  <\binom{n+\lfloor d\rfloor}{n},
\]
which is a contradiction.  Thus $T\equiv0$ and $g$ is K\"ahler. Hence $(M,g)$ is holomorphically isometric to the complex Euclidean space $\mathbb{C}^n$. 

Assume that the equality holds in \eqref{eq:intro-dimension} for some $d \geq 1$ and $(M, g)$ is not necessarily simply-connected. It suffices to recall the following fact: if $(M, g)$ is any nontrivial isometric quotient manifold of the complex Euclidean space $\mathbb{C}^n$, the strict inequality $\dim_{\mathbb{C}}\mathcal{O}_{d} (M, g)< \dim_{\mathbb{C}} \mathcal{O}_{\lfloor d \rfloor}(\mathbb{C}^n)$ holds for any $d \geq 1$.
\end{proof}

By the proof of Lemma \ref{sharp_upper}, we have the following observation; see \cite[Theorem~1.4]{MWY2026} for a related result.

\begin{corollary}\label{semisimple}
    Let $(M^n, g)$ be a complete Chern-flat Hermitian manifold. Suppose the rank of its torsion tensor $\rho=n$. Then $(M^n,g)$ admits no non-constant holomorphic functions with polynomial growth. In particular, any semi-simple complex Lie group admits no non-constant holomorphic functions with polynomial growth with respect to a left-invariant Hermitian metric.
\end{corollary}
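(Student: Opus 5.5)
Since $\rho=n$, there is a point $q\in M$ at which $I_q(T)=T_q^{1,0}M$. The plan is to apply Lemma~\ref{sharp_upper} on the universal cover at such a point, and then conclude by induction on the growth order $d$.

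\textbf{Reduction.} First pass to the universal cover $\pi:(\M,\g)\to(M,g)$. Pullback gives an injection $\mathcal{O}_d(M,g)\hookrightarrow\mathcal{O}_d(\M,\g)$, since $d_g(p,\pi(x))\le d_{\g}(\widetilde p,x)$ and so growth bounds are preserved. The torsion of $\g$ is the pullback of that of $g$, so the rank function satisfies $\rho(\widetilde q)=\rho(\pi(\widetilde q))$ and the maximum is still $n$. It therefore suffices to treat $M$ simply connected. By Corollary~\ref{integer_growth} we may also take $d$ to be a nonnegative integer.

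\textbf{Main step.} We show $\dim_{\mathbb C}\mathcal{O}_d(M,g)=1$ for every integer $d\ge0$, i.e.\ only the constants occur. The case $d=0$ is the second part of Proposition~\ref{Cauchy}, since a bounded function is sublinear.

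For $d\ge1$, evaluate Lemma~\ref{sharp_upper} at $q$ with $\rho(q)=n$. Its proof gives
\[
\dim\mathcal{O}_d \le \dim J^{d-1}_q\mathcal{O}_M + \binom{d-1}{d} = \binom{n+d-1}{n}.
\]
This bound is not enough on its own. We instead use the sharper statement inside that proof: $K_q=0$. So for $f\in\mathcal{O}_d$ with vanishing $(d-1)$-jet at $q$, we get $f\equiv0$, and hence $j^{d-1}_q$ is injective on $\mathcal{O}_d$.

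Now induct. Take $f\in\mathcal{O}_d$ with $d\ge1$. By Proposition~\ref{Cauchy}, each $E_i f\in\mathcal{O}_{d-1}$, so by induction $E_i f$ is constant, say $E_i f=c_i$. Then for all $i,j$,
\[
[E_i,E_j]f=E_iE_jf-E_jE_if=E_i(c_j)-E_j(c_i)=0 .
\]
The vectors $[E_i,E_j](q)$ span $I_q(T)=T^{1,0}_qM$ by \eqref{eq:torsion-coefficients}. Since $[E_i,E_j]=-2T_{ij}^kE_k$, we get $\sum_k T_{ij}^k(q)c_k=0$ for all $i,j$. The vectors $(T_{ij}^k(q))_k$ span $\mathbb{C}^n$, so all $c_k=0$. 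Hence $df=0$ and $f$ is constant. (This argument makes the dimension bound above unnecessary; it is the same mechanism made explicit.)

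\textbf{Lie group case.} For a semisimple complex Lie group $G$ with a left-invariant Hermitian metric, take a unitary left-invariant frame. It is holomorphic and parallel for the Chern connection, so the metric is complete and Chern-flat. The torsion is, up to a constant, the Lie bracket, because $[E_i,E_j]=-T(E_i,E_j)$. Hence $I_e(T)=[\mathfrak g,\mathfrak g]=\mathfrak g$ by semisimplicity, so $\rho=n$ and the first part applies.

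\textbf{Main obstacle.} The only delicate point is that the constants $c_k$ are global while the spanning condition holds only at the single point $q$. This is handled because $E_if$ is globally constant, so the relation $\sum_k T_{ij}^k c_k=0$ holds everywhere and in particular at $q$.
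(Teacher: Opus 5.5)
Your proof is correct and follows essentially the paper's route. The paper obtains the corollary from the proof of Lemma~\ref{sharp_upper}: with $\rho(q)=n$ the bound at $d=1$ already reads $\dim_{\mathbb{C}}\mathcal{O}_1(M,g)\le 1$, and higher growth orders reduce to linear growth by iterating Proposition~\ref{Cauchy}. This is exactly the commutator-spanning mechanism your induction on $d$ makes explicit, and your reduction to the universal cover and your treatment of the semisimple case via $I_e(T)=[\mathfrak{g},\mathfrak{g}]=\mathfrak{g}$ are also fine.
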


\subsection{Chern-flat manifolds which satisfy Property (H)}

In this subsection, we study complete Chern-flat manifolds with Property (H); see Definition \ref{def_property}. The main goal is to prove Theorem \ref{thm:high-dimension} and \ref{thm:surface-rigidity}. First, we give a proof of Theorem \ref{thm:high-dimension} using Lemma \ref{split}.

\begin{proof}[Proof of Theorem \ref{thm:high-dimension}]
    We argue by the following steps.

    \medskip

    \textbf{Step 1:}  We assume that $M$ is simply connected. 

    \medskip
    
   \textbf{Step 1a:} We show that $M$ is biholomorphic to $\mathbb{C}^n$. 

    \medskip
   
   By the same argument in the proof of Theorem \ref{thm:polynomial-torsion}, we use Proposition  \ref{Cauchy} repeatedly and conclude that there exists a maximal integer $s \geq 1$ such that for any $i_1,\ldots,i_s \in \{1,2,\ldots,n\}$,
\[
  E_{i_s}\cdots E_{i_1}(f_1)
\]
is constant but not all of them is zero. Set
\[
  u=E_{i_{s-1}}\cdots E_{i_1}(f_1)\ \ \text{if}\ s>1;\ \ \ \ u=f_1\ \text{when}\ s=1.
\] 
It follows from Lemma \ref{split} that $M$ is biholomorphic to $\mathbb{C} \times N$. By \eqref{distance}, the restrictions of $f_1,f_2\ldots,f_n$ to $N$ are also of polynomial growth with respect to the induced complete Chern-flat metric on $N$. Since $f_1,f_2\ldots,f_n$ give local coordinates on $M$, they cannot all be constant on $N$. Hence $N$ admits a non-constant holomorphic function with polynomial growth. Lemma \ref{split} can also be applied on $N$. It follows by induction that $M$ is biholomorphic to $\mathbb{C}^n$. 

   \medskip
   
  \textbf{Step 1b:} We prove the existence of $\Psi$ by an induction on the dimension $n$.

 \medskip
  
  For $n=1$, $M$ is holomorphically isometric to the complex plane $\mathbb{C}$ while $f_1$ is a polynomial. This is trivial.  Suppose that for any simply connected complete Chern-flat manifold $N^{n-1}$ with enough holomorphic functions of polynomial growth to give local coordinates, there exists a biholomorphism $\Psi: \mathbb{C}^{n-1} \to N$ that satisfies the conclusion of Theorem \ref{thm:high-dimension}. Now we consider a simply connected complete Chern-flat manifold $M^{n}$ satisfying the assumption of Theorem \ref{thm:high-dimension}. Together with Step 1, \eqref{eq:translation-u}, and \eqref{biholo_F}, this yields a biholomorphism $F: \mathbb{C} \times N \to M, \ (t,y) \mapsto \Phi^X_t(y)$. For any $f \in \mathcal{O}_d(M,g)$, we introduce
\(
h(y)= f\left( \Phi^X_t(y)\right) \in \mathcal{O}_d(N,g|_N)
\) for each $t \in \mathbb{C}$. Fix any $y \in N$. We may write
\begin{equation}\label{poly_extension}
    f\left( \Phi^X_t(y)\right)=\sum_{j=1}^d \frac{1}{j!} X^j (f)(y) t^j,
\end{equation}
where we use that $X^{j}(f)\equiv 0$ for $j>d$ by Proposition \ref{Cauchy}. By the induction hypothesis, $y= \widehat{\Psi}(z_1, \ldots, z_{n-1})$ and each
\(
Q_j(z)= X^j (f)(y)= X^j (f)\left( \Psi_N(z) \right)
\)
is a polynomial of $z_1, \cdots, z_{n-1}$. Let $\Psi(t,z)=F(t, \widehat{\Psi}(z))=\Phi^X_t(\widehat{\Psi}(z))$. Together with \eqref{poly_extension}, we get that
\(
f\circ \Psi (t,z)=\sum_{j=1}^d \frac{1}{j!} Q_j(z) t^j,
\)
which is a polynomial on $t, z_1, \ldots, z_{n-1}$.

  \medskip

\textbf{Step 2:} We assume that $M$ is not necessarily simply connected. 

  \medskip

The above argument holds on the universal cover $\M$. Write $M \simeq \M /\Gamma$ where $\Gamma \subset \operatorname{Aut}(\widetilde{M})$ is the deck transformation group. Let $\widetilde{f}_j$ be the lift of $f_j$ for $j=1,\ldots,n$. Take a biholomorphism $\Psi: \mathbb{C}^n \to \M$ with $\Psi(0)=\widetilde{p}$ obtained in Step 1. Define $P_j= \widetilde{f_j} \circ \Psi$ and 
\[
P=(P_1,\ldots,P_n): \mathbb{C}^n \to \mathbb{C}^n \ \ \text{is a polynomial map with} \ \ \det P'(0)\ne 0.
\]
For simplicity, we assume that $P(0)=0$. Note that $P(\Psi^{-1} \circ \gamma \circ \Psi)=P$ for each $\gamma \in \Gamma$. As $P'(0)$ is invertible, we see that $P'(\Psi^{-1} (\gamma (\widetilde{p})))$ is also invertible for each $\gamma \in \Gamma$. By the inverse function theorem, $P$ is a local biholomorphism at each $\Psi^{-1} (\gamma (\widetilde{p}))$. Consider the affine algebraic set $P^{-1}(0)$. We observe that $\Psi^{-1} (\gamma (\widetilde{p}))$ is an isolated point for each $\gamma \in \Gamma$. As every affine algebraic set has only finitely many irreducible components, it follows that $\{\Psi^{-1} (\gamma (\widetilde{p})) \ | \ \gamma \in \Gamma\}$ is a finite set,  Moreover, if $\Psi^{-1} (\gamma_1 (\widetilde{p})) =\Psi^{-1} (\gamma_2 (\widetilde{p}))$, then $\gamma_1=\gamma_2$ by the uniqueness of the lifting. Hence $\Gamma$ is finite. Note that $\pi_1(M)$ is torsion-free; see, for example, \cite[Proposition 2.45]{Hatcher}. Then $\Gamma$ is trivial and $M$ is biholomorphic to $\mathbb{C}^n$.
\end{proof}

\begin{proposition}\label{prop:nilpotent-brackets}
Let $(M^n,g)$ be a complete simply connected Chern-flat manifold with Property (H). Let $\{E_1,\ldots,E_n\}$ denote the global unitary holomorphic frame  defined in \eqref{eq:parallel-frame}.
Then the complex Lie algebra of holomorphic vector fields generated by $\{E_1,\ldots,E_n\}$ is finite-dimensional and nilpotent.
\end{proposition}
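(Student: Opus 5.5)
Our plan is to realize the Lie algebra generated by $E_1,\ldots,E_n$ as a Lie algebra of derivations acting faithfully on a finite-dimensional space of polynomial-growth functions, with each $E_i$ lowering the growth degree. Let $f_1,\ldots,f_n$ be the functions from Property (H), and choose an integer $d$ with $f_1,\ldots,f_n\in\mathcal{O}_d(M,g)$; this is possible by Corollary~\ref{integer_growth}. Put $W=\mathcal{O}_d(M,g)$, which is finite-dimensional by Lemma~\ref{upper}. Proposition~\ref{Cauchy} gives $V(\mathcal{O}_k)\subset\mathcal{O}_{k-1}$ for every constant-coefficient combination $V$ of the $E_i$. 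Combined with Corollary~\ref{integer_growth}, this yields a filtration
\[
0\subset\mathcal{O}_0(M,g)=\mathbb{C}\subset\mathcal{O}_1(M,g)\subset\cdots\subset\mathcal{O}_d(M,g)=W
\]
such that each $E_i$ maps the $k$-th step into the $(k-1)$-th step and kills constants. Let $\mathfrak g$ be the Lie algebra generated by the $E_i$. A bracket of two derivations lowering the filtration also lowers it, so every element of $\mathfrak g$ acts on $W$ as a derivation lowering the filtration. The restriction map $\mathfrak g\to\mathfrak{gl}(W)$ therefore lands in the strictly upper-triangular algebra for this flag, and that algebra is nilpotent of step at most $d+1$.

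The remaining step, and the main obstacle, is to show that this representation is faithful. Suppose $Y\in\mathfrak g$ is a holomorphic vector field with $Y(f)=0$ for all $f\in W$. Then $Y(f_j)=0$ for $j=1,\ldots,n$. Since $df_1,\ldots,df_n$ are linearly independent near $p$, this forces $Y=0$ on a neighborhood of $p$. By the identity theorem for holomorphic vector fields on the connected manifold $M$, we get $Y\equiv 0$. So $\mathfrak g$ embeds into $\mathfrak{gl}(W)$, which gives finite-dimensionality and nilpotency at once.

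One point needs checking: the action must be compatible with brackets as vector fields. It is, because vector fields act on functions as derivations, and $[X,Y]f=X(Yf)-Y(Xf)$ holds as an identity of vector fields applied to $f$. The other point is that elements of $\mathfrak g$ are not constant-coefficient combinations of the $E_i$, so Proposition~\ref{Cauchy} does not apply to them directly. We avoid this by iterating: each element of $\mathfrak g$ is a linear combination of iterated brackets of the $E_i$, and expanding each bracket into compositions of the $E_i$ shows it maps $\mathcal{O}_k$ into $\mathcal{O}_{k-1}$. In fact the iterated bracket $[E_{i_1},[\ldots,E_{i_m}]]$ maps $\mathcal{O}_k$ into $\mathcal{O}_{k-m}$, so every bracket of length greater than $d+1$ vanishes on $W$ and hence vanishes by faithfulness. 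This gives nilpotency of step at most $d+1$ directly.
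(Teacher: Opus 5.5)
Your proof is correct and is essentially the paper's argument: Proposition~\ref{Cauchy} shows that iterated brackets of length at least $d+1$ annihilate $f_1,\ldots,f_n$, and the local-coordinate property together with the identity theorem forces them to vanish identically. The paper does not pass through the faithful representation on $\mathcal{O}_d(M,g)$, so it does not need Lemma~\ref{upper}; it gets finite-dimensionality directly from the fact that $\mathfrak g$ is then spanned by the finitely many brackets of length at most $d$, which is exactly your final paragraph.
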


\begin{proof}[Proof of Proposition~\ref{prop:nilpotent-brackets}]
Choose $d>0$ so that all $f_1,\ldots,f_n \in \mathcal{O}_d(M,g)$. Note that any element of the complex Lie algebra generated by $\{E_1,\ldots,E_n\}$ can be expressed as a finite linear combination of
\[
E_i, \ [E_i, E_j], \ [[E_i, E_j], E_k],,\ [[[E_i, E_j], E_k], E_l], \ldots,  \ \ \text{where}\ 1 \leq i, j, k, l \ldots \leq n.
\]
By Proposition \ref{Cauchy}, any iterated bracket of length $k \geq d+1$ in the above list sends $f_1, \ldots, f_n$ to zero. As $f_1,\ldots,f_n$ give local coordinates near $q_0$, any such a bracket vector field must vanish on a neighborhood of $q_0$, therefore it vanishes identically on $M$. Consequently, the complex Lie algebra generated by $\{E_1,\ldots,E_n\}$ is finite-dimensional and nilpotent.
\end{proof}

\begin{proof}[Proof of Theorem~\ref{thm:surface-rigidity}]
According to Theorem \ref{thm:high-dimension}, it suffices to assume that $M$ is simply connected.
Suppose that $f_1$ and $f_2$ are holomorphic functions of polynomial growth and $df_1\wedge df_2$ is nonzero at some point on $M$. Let $X,Y$ be a global holomorphic frame and $\alpha,\beta$ the dual coframe.
It follows from Proposition
\ref{prop:nilpotent-brackets} that there exists an integer $N$ such that
\[
  \ad_X^NY=0,
  \qquad
  \ad_Y^NX=0.
\]

Let $\Phi_t^X$ be the complete complex flow of $X$, and define $Y_t=(\Phi_{t}^X)^*(Y) \coloneqq (\Phi_{-t}^X)_*Y$. Then we may check that
\begin{equation*}
    \frac{d}{dt} Y_t=\mathcal{L}_X Y_t=[X,Y_t],\ \ \ \ \frac{d^k}{dt^k}Y_t= [X,\frac{d^{k-1}}{dt^{k-1}} Y_t]=\ad_X^k Y_t,\ \ \ \text{for any }k \ge 1.
\end{equation*}
As $\frac{d^k}{dt^k} Y=0$ for any $k \geq N$, we obtain a polynomial expansion of $Y_t$.
\begin{equation}\label{taylor}
    Y_t=Y+ t[X,Y]+\frac{t^2}{2}[X,[X,Y]]+\cdots+ \frac{t^{N-1}}{(N-1)!} \ad_X^{N-1}Y,\ \ \ \text{for all }t \in \mathbb{C}
\end{equation}

Fix $q\in M$. Note that $X_q$ and $Y_q$ form a basis of $T_q^{1,0}M$. We may write
\[
(Y_t)_q=A_q(t) X_q + B_q(t) Y_q,
\]
where $A_q(t)=\alpha_q ((Y_t)_q)$, $B_q(t)=\beta_q ((Y_t)_q)$. Note that $X_t,Y_t$ are linear independent everywhere while $X_t=(\Phi_t^X)^*X=X$. Then 
\begin{equation} \label{2by2_matrix}
\begin{pmatrix}
    1& A_q(t) \\
    0& B_q(t)
\end{pmatrix}    
\end{equation}
is invertible and $B_q(t)$ is nowhere vanishing. However, $B_q(t)$ is a polynomial of $t$. Indeed, it follows from \eqref{taylor} that 
\begin{equation*}
    B_q(t)=\beta_q ((Y_t)_q)=\beta_q (Y_q) +t\beta_q ([X,Y]_q) +\cdots+ \frac{t^{N-1}}{(N-1)!}\beta_q ((\ad_X^{N-1}Y)_q).
\end{equation*}
Therefore $B_q(t)$ has to be constant. Hence we get
\begin{equation*}
    0=\left.\frac{d}{dt}\right|_{t=0}B_q(t)=\beta_q ([X,Y]_q).
\end{equation*}
Hence $\beta ([X,Y])=0$ as the point $q$ is chosen arbitrarily.

The same argument using the complete flow of $Y$ gives
\(
\alpha ([X,Y])=0.
\)
Since $\alpha$ and $\beta$ form a global coframe, we have $[X,Y]=0$. It follows from \eqref{eq:torsion-coefficients} that the Chern torsion of $(M, g)$ vanishes everywhere on $M$. We conclude that $(M,g)$ is holomorphically isometric to $\mathbb{C}^2$.

\end{proof}

\begin{remark}\label{rem:nilpotent}
Note that the proof Theorem \ref{thm:surface-rigidity} relies on the fact that if $2 \times 2$ matrix in \eqref{2by2_matrix} is invertible, then $B_q(t)$ is nowhere zero. The same argument in dimension $\geq 3$ breaks down as we could have an invertible matrix as
$\begin{pmatrix}
  1 & 1 & 1  \\
  0 & t & 1  \\
  0&  t^2-1 & t
\end{pmatrix}$.    
On the other hand, in \cite[Proposition 5.4]{MWY2026}, we construct a complete Chern-flat metric on $\mathbb{C}^3$ which satisfies Property (H); see also Remark \ref{rem:no-isometric-conclusion}. This metric is not Euclidean as long as $\frac{\partial \xi}{\partial z_1}-\frac{\partial \eta}{\partial z_2}$ in \eqref{eta_xi} is non-zero. Moreover, any simply connected nilpotent complex Lie group equipped with a left-invariant metric admits global holomorphic coordinates of polynomial growth, see \cite[Theorem 4.13]{MWY2026} for more details.
\end{remark}

\appendix

\section{An alternative approach to Theorem \ref{thm:high-dimension} via Palais' theorem}\label{app_A}

Our goal is to provide an alternative proof of the following special case of Theorem \ref{thm:high-dimension}.

\begin{theorem}\label{thm:high-dimension_part}
    Let $(M^n,g)$ be a complete simply connected Chern-flat Hermitian manifold which satisfies Property (H). Then $M$ is biholomorphic to $\mathbb{C}^n$. 
\end{theorem}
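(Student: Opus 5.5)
The plan is to realize $M$ as a homogeneous space of a simply connected nilpotent complex Lie group, using Palais' integrability theorem, and then to identify that homogeneous space with $\mathbb{C}^n$. Let $\{E_1,\ldots,E_n\}$ be the global parallel unitary frame of \eqref{eq:parallel-frame}. By Proposition~\ref{prop:nilpotent-brackets}, the complex Lie algebra $\mathfrak{g}$ of holomorphic vector fields generated by the $E_i$ is finite-dimensional and nilpotent. Each $E_i$ is $\mathbb{C}$-complete by Lemma~\ref{lem:complete-flows}. Hence the real parts $\operatorname{Re}E_i$ and $\operatorname{Re}(\sqrt{-1}E_i)$ are complete real vector fields.

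\textbf{Step 1: integrate the Lie algebra action.} Consider the real Lie algebra underlying $\mathfrak{g}$, realized as real vector fields. It is finite-dimensional and generated by complete vector fields. Palais' theorem \cite{Palais1957} then says that every element is complete, and that the action integrates to a smooth action of the simply connected real Lie group $G$ with Lie algebra $\mathfrak{g}$. Since the generators are real parts of holomorphic fields, $G$ carries a complex Lie group structure. It is the simply connected complex nilpotent group with Lie algebra $\mathfrak{g}$, and it acts holomorphically on $M$.

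\textbf{Step 2: transitivity.} The $E_i$ span $T^{1,0}_xM$ at every point, so every orbit is open. Since $M$ is connected, $G$ acts transitively. Thus $M\cong G/H$, where $H$ is the (closed, complex) isotropy subgroup of a point.

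\textbf{Step 3: identify $G/H$ with $\mathbb{C}^n$.} This is where simple connectivity and nilpotency are used. From the long exact sequence of the fibration $H\to G\to G/H$, with $G$ simply connected and $M$ simply connected, we get that $H$ is connected. For a simply connected nilpotent Lie group, $\exp$ is a diffeomorphism and every connected closed subgroup is simply connected. Matsushima's result \cite{Ma1961} on homogeneous spaces of complex nilpotent (more generally solvable) groups should then show that $G/H$ is biholomorphic to $\mathbb{C}^n$. Concretely, one takes a Malcev basis of $\mathfrak{g}$ adapted to a central series through $\mathfrak{h}$. Coordinates of the second kind, $(t_1,\ldots,t_n)\mapsto \exp(t_1X_1)\cdots\exp(t_nX_n)H$, then give a biholomorphism $\mathbb{C}^n\to G/H$, by induction on a central series as a tower of principal $\mathbb{C}$-bundles over $\mathbb{C}^k$, each of which is trivial.

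The main obstacle is Step 3, specifically the bookkeeping that makes the complex Malcev-type coordinates on $G/H$ a global biholomorphism rather than just a diffeomorphism; this is where I will lean on Matsushima's theorem. A secondary subtlety is checking in Step 1 that Palais' theorem gives a holomorphic action. This follows because each flow $\Phi^{V}$ is holomorphic and $G$ is generated by the images of such one-parameter subgroups.
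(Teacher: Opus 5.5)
Your proposal is correct and follows the paper's proof essentially step for step: Proposition~\ref{prop:nilpotent-brackets} gives finite-dimensionality and nilpotency, a Palais-type integration gives a transitive action of the simply connected nilpotent group $G$, the homotopy sequence of $H\to G\to G/H$ gives that $H$ is connected, and Matsushima's lemma finishes. The only difference is that the paper proves a holomorphic Palais theorem directly (Lemma~\ref{Palais}, via holomorphic Frobenius on $G\times M$), whereas you apply the real theorem to $\operatorname{Re}E_i,\operatorname{Re}(\sqrt{-1}E_i)$ and then upgrade. For that upgrade you need holomorphic dependence on $g$, so that $G/H\to M$ is a biholomorphism, and this comes from $\operatorname{Re}(\sqrt{-1}V)=J\operatorname{Re}V$, which makes the differential of each orbit map complex linear; holomorphy of each individual flow alone does not give it.
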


\begin{lemma}\label{Palais}
    Let $\mathfrak{g}$ be a finite-dimensional complex Lie algebra of holomorphic
vector fields on a complex manifold $M$, generated by $E_1,\ldots, E_m \in H^0(M, T^{1, 0}M)$. Let $G$ be the simply connected complex Lie group associated with Lie algebra $\mathfrak{g}$. 
Assume that each $E_j$ is complete. Then every $Z \in\mathfrak{g}$ generates a complete holomorphic flow on $M$. Moreover, there exists a holomorphic left action $L: G\times M\to M$ such that
\begin{equation}\label{eq:left-infinitesimal-sign}
 Z(x)=\left.\frac{d}{dt}\right|_{t=0}L(\exp(-tZ),x),
 \qquad \forall\,Z\in\mathfrak{g}.
\end{equation}

\end{lemma}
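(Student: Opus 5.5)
This is essentially Palais' integrability theorem. The plan has two parts: first show that every element of $\mathfrak{g}$ is complete, then integrate the resulting complete Lie algebra action to a $G$-action.

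\textbf{Step 1: completeness of all of $\mathfrak{g}$.} Let $\mathcal{C}\subset\mathfrak{g}$ be the set of complete elements. It contains $E_1,\ldots,E_m$. It is also invariant under $\operatorname{Ad}$ of the flows: if $Z\in\mathcal{C}$ and $W\in\mathfrak{g}$ is complete with flow $\Phi^W_t$, then $(\Phi^W_t)_*Z$ is complete, because its flow is $\Phi^W_t\circ\Phi^Z_s\circ\Phi^W_{-t}$.

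Moreover $(\Phi^W_t)_*Z=e^{-t\,\operatorname{ad}W}Z$ stays in $\mathfrak{g}$. To see this, note that $\frac{d}{dt}(\Phi^W_t)_*Z=-[W,(\Phi^W_t)_*Z]$. This is a linear ODE on the finite-dimensional space $\mathfrak{g}$, and the uniqueness of solutions of the PDE pointwise identifies the two sides.

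Let $H$ be the group of biholomorphisms generated by the flows $\Phi^{E_j}_t$. Then $H$ acts on $\mathfrak{g}$ through a subgroup of $\exp(\operatorname{ad}\mathfrak{g})$, and the orbit $H\cdot\{E_1,\ldots,E_m\}$ lies in $\mathcal{C}$. The span of this orbit is an $\operatorname{Ad}$-invariant subspace containing the generators. Differentiating in $t$ shows that it is closed under $\operatorname{ad}E_j$, so it equals $\mathfrak{g}$. Therefore we can choose a basis $Z_1,\ldots,Z_r$ of $\mathfrak{g}$ consisting of complete fields.

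Next, the map $(t_1,\ldots,t_r)\mapsto \Phi^{Z_1}_{t_1}\circ\cdots\circ\Phi^{Z_r}_{t_r}$ is a local parametrisation of a neighbourhood of the identity in the local group. Hence for small $s$, the local flow of an arbitrary $Z$ is a composition of globally defined maps, and so it is defined on all of $M$ for $|s|<\epsilon$, with $\epsilon$ uniform in $x$. A vector field whose flow exists for a uniform time $\epsilon$ from every point is complete, by concatenating.

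\textbf{Step 2: constructing the action.} Consider the anti-homomorphism $\mathfrak{g}\to\mathfrak{X}(M)$, which reflects the sign in \eqref{eq:left-infinitesimal-sign}; negating it gives a homomorphism. By the fundamental theorem of Lie–Palais, this gives a local action of $G$ near $e$, defined by $L(\exp(-Z),x)=\Phi^Z_1(x)$ on a neighbourhood $U$ of $e$. Step 1 ensures the action is defined on all of $U\times M$. The local action relations $L(g_1g_2,x)=L(g_1,L(g_2,x))$ for $g_1,g_2,g_1g_2\in U$ follow from the Baker–Campbell–Hausdorff formula, or from the standard graph or foliation argument on $G\times M$. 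One uses the foliation spanned by the fields $(Z^{R},-Z)$, whose leaves project to $G$ as coverings because the flows are complete.

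Since $G$ is connected and simply connected, the local action extends uniquely to a global holomorphic action. Each $g\in G$ is a product $g=u_1\cdots u_k$ with $u_i\in U$, and well-definedness follows from monodromy using $\pi_1(G)=1$. Holomorphy holds because the flows of holomorphic fields are holomorphic in $(t,x)$, with complex time as in Lemma~\ref{lem:complete-flows}.

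The main obstacle is Step 1, specifically showing that completeness of the generators propagates to all of $\mathfrak{g}$. The key point there is the invariance of $\mathfrak{g}$ under pushforward by the flows, and I would verify it carefully via the linear ODE argument above.
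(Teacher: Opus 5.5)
Your outline is correct, and its first half coincides with the paper's Step 1: the paper also gets a basis of complete fields by transporting the generators along the flows of the $E_i$ and noting that the span is $\ad E_i$-stable. Your linear-ODE identification $(\Phi^W_t)_*Z=e^{-t\,\ad W}Z$ supplies something the paper leaves implicit, namely that these transported fields stay in $\mathfrak g$.

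After that the routes diverge. The paper never proves completeness of an arbitrary $Z$ directly. It takes the involutive distribution spanned by the fields $(Z^R,-Z)$ on $G\times M$ and uses only the complete basis and the lifted flows $\widehat{\Phi}^j_t(g,y)=(\exp(tZ_j)g,\Phi^{Z_j}_{-t}(y))$. With these it shows that each leaf maps onto $G$ and that the sets $Ug_0$ are evenly covered, so each leaf is biholomorphic to the simply connected $G$. It then reads off $L$ and the action law from right translations. Completeness of every $Z$ comes for free, since $t\mapsto L(\exp(-tZ),x)$ is an entire integral curve.

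You instead prove completeness of all of $\mathfrak g$ first by a uniform-time argument. This makes $L(\exp(-Z),\cdot)=\Phi^Z_1$ defined on all of $U\times M$, and you then extend the local homomorphism $U\to\operatorname{Aut}(M)$ by the monodromy principle for simply connected groups. This separates the analytic input from a purely group-theoretic extension and proves the first assertion of the lemma explicitly. However, it rests on two claims you only assert.

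The first claim is that $\Phi^Z_s=\Phi^{Z_1}_{t_1(s)}\circ\cdots\circ\Phi^{Z_r}_{t_r(s)}$ whenever $\exp(-sZ)=\exp(-t_1(s)Z_1)\cdots\exp(-t_r(s)Z_r)$. To verify it, differentiate the right-hand side in $s$. By your ODE lemma, its velocity at each point of the curve is $Y(s)$ evaluated there, where $Y(s)=\sum_j t_j'(s)\,e^{-t_1\ad Z_1}\cdots e^{-t_{j-1}\ad Z_{j-1}}Z_j\in\mathfrak g$ does not depend on the starting point $x$. The identical computation in $G$, using $\operatorname{Ad}(\exp(-tZ_i))=e^{-t\,\ad Z_i}$, shows $Y(s)=-\gamma'(s)\gamma(s)^{-1}=Z$ for $\gamma(s)=\exp(-sZ)$.

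The second claim is the local homomorphism property on $U\times M$. Citing the Baker--Campbell--Hausdorff formula is not by itself an argument for flows on a noncompact manifold. The same comparison settles it, since both sides solve one time-dependent ODE driven by a $\mathfrak g$-valued curve.

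With these two points filled in, your route is complete. The foliation alternative you mention is exactly the paper's argument. There, though, the one-line assertion that leaves project as coverings because the flows are complete is precisely what the paper's Steps 3--4 have to prove.
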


Lemma \ref{Palais} is a holomorphic version of an important result due to Palais \cite[Theorem III, p.~95]{Palais1957}; see \cite[Corollary 5.1]{Jouan2010} for an alternative proof of Palais' theorem. We observe that the same conclusion remains valid for holomorphic vector fields. For the sake of convenience, we include a proof of Lemma \ref{Palais} based on the exposition of a related result in \cite[Theorem~6.5]{Michor}.

\begin{proof}[Proof of Lemma \ref{Palais}]
We divide the proof into the following steps.

\textbf{Step 1:} Construct a complete basis of $\mathfrak{g}$.

Let $X \in \mathfrak{g}$ be complete which generates a holomorphic flow $\Phi_t^X$. As in the proof of Theorem \ref{thm:surface-rigidity}, set $Y_t=(\Phi_{t}^X)^*(Y) = (\Phi_{-t}^X)_*Y$. Note that $Y_t$ is complete if and only if $Y$ is complete. Since $\mathfrak{g}$ is generated by complete holomorphic vector fields $E_1,\ldots,E_m$, $(\Phi_{t_i}^{E_i})^* E_j$ is complete. In general, let $\mathcal{C}$ consist of the generators and all fields obtained by
finitely many operations $Y\mapsto(\Phi_t^{E_i})^* Y$. Then each element in $\mathcal{C}$ is complex-complete.  Let $\mathfrak{a}=\Span\{Y \in \mathcal{C}\}$. Then $\mathfrak{a}$ is a subspace of $\mathfrak{g}$, which is invariant under
$\Phi_t^{E_i}$. For any $Y \in \mathfrak{a}$, we have $(\Phi_t^{E_i})^* Y\in \mathfrak{a}$. Hence $[E_i, Y] \in \mathfrak{a}$ by taking Lie derivatives. That is to say, $[E_i, \mathfrak{a}] \in \mathfrak{a}$. However, $\mathfrak{g}$ is generated by such $E_i$. Hence $\mathfrak{g}=\mathfrak{a}$ and we may choose a complete basis in $\mathcal{C}$.

\textbf{Step 2:} Construct a holomorphic involutive distribution. 

Let $Z^R(g)$ denote the right-invariant holomorphic vector field where $Z \in \mathfrak{g}$ on $G$. Define
\begin{equation}\label{complex-distribution}
 \mathcal{D}_{(g,x)}
 =\{(Z^R(g),-Z(x)) \ | \ Z\in\mathfrak{g}\}
 \subset T^{1,0}_{(g,x)}(G\times M).
\end{equation}
Recall that for right-invariant vector field,  $[Z^R,W^R]=-[Z,W]^R$. Suppose $(Z^R,-Z)$, $(W^R,-W) \in \mathcal{D}$. Then
\[
 [(Z^R,-Z),(W^R,-W)]
 =(-[Z,W]^R,[Z,W])=-([Z,W]^R,-[Z,W]) \in \mathcal{D}.
\]
Therefore $\mathcal{D}$ is involutive. By holomorphic Frobenius theorem, $\mathcal{D}$ is integrable and there exists a maximal connected integral leaf $\mathcal{L}_x$ through $(\mathbf{1}_G,x)$. Let $\operatorname{pr}_1:G\times M \to G$ be the natural projection to the first factor. Then restricted
projection
\[
\pi_x= \operatorname{pr}_1|_{\mathcal{L}_x}: \mathcal{L}_x \to G
\]
is a local biholomorphism because $(\pi_x)_* (Z^R(g),-Z)=Z^R(g)$. 

\textbf{Step 3:} We show that $\pi_x$ is surjective.

Choose $\{Z_1,Z_2,\ldots,Z_s\} \subset \mathcal{C}$ to be a complete basis of $\mathfrak{g}$. For simplicity of notation, write $\widehat{Z}_j=(Z_j^R,-Z_j)$. Then each $\widehat{Z}_j$ has complete complex flow
\begin{equation}\label{productflow}
 \widehat{\Phi}_t^j(g,y)
=\bigl(\exp(tZ_j)g,\Phi_{-t}^{Z_j}(y)\bigr).
\end{equation}
These flows preserve each maximal leaf.
For $z=(z_1,\ldots,z_s) \in \mathbb{C}^s$, set
\[
\Xi(z)=\exp(z_s Z_s) \cdots\exp(z_1 Z_1).
\]
Since $d\Xi_0(w)=\sum_j w_j Z_j$, $\Xi$ is a local biholomorphism near the origin. Choose $B=B(0,\varepsilon)$ sufficiently small on which $\Xi$ is biholomorphic to an open neighborhood $\mathbf{1}_G \in U$.

We claim a basic fact: if $g \in \pi_x(\mathcal{L}_x)$, then $Ug \subset \pi_x(\mathcal{L}_x)$. Indeed, if $g \in \pi_x(\mathcal{L}_x)$, then there exists $q \in \mathcal{L}_x$ such that $\pi_x(q)=g$. For any $u \in U$, $u=\Xi(z)$ for some $z=(z_1,\ldots,z_s) \in B$. Consider 
\[
q_z=\widehat{\Phi}_{z_s}^s \circ \cdots \circ \widehat{\Phi}_{z_1}^1 (q) \in \mathcal{L}_x.
\]
It follows from \eqref{productflow} that
\[
\pi_x (q_z)=\Xi(z)g=ug.
\]
Consequently
\[
 Ug\subset \pi_x(\mathcal L_x).
\]

Now we prove $\pi_x(\mathcal{L}_x)=G$. First, $\mathbf{1}_G \in \pi_x(\mathcal{L}_x)$ since $(\mathbf{1}_G,x) \in \mathcal{L}_x$. 
Take an open symmetric neighborhood $V=V^{-1}\subset U$ centered at $\mathbf{1}_G$, and define
\[
 V^k=\{v_1\cdots v_k\ | \ v_1,\ldots,v_k\in V\},\qquad k\geq1.
\]
For any $g \in V$, $Vg \subset Ug \subset \pi_x(\mathcal{L}_x)$ by the previous claim. Hence $V^k \subset \pi_x(\mathcal{L}_x)$ by induction.
It's direct to check that the union $K=\bigcup_{k\geq1}V^k$ is a subgroup of $G$. Every $V^k$ is open, so $K$ is open. Connectedness of $G$ implies that $K=\pi_x(\mathcal{L}_x)=G$. Thus $\pi_x$ is surjective.

\textbf{Step 4:} We prove that $\pi_x$ is a biholomorphism.

Since $G$ is simply connected, it suffices to show that $\pi_x:\mathcal L_x\longrightarrow G$ is a covering map. We claim that for any $g_0\in G$, there exist a neighborhood $\mathcal{U}_y$ where $y \in \pi^{-1}_{x}(g_0)$, such that 
\[
\pi^{-1}_x(Ug_0)=\bigsqcup_{y\in\pi^{-1}_{x}(g_0)}\mathcal{U}_y
\]
and $\pi_x:\mathcal{U}_y\to Ug_0$ is biholomorphic. 
Fix $g_0\in G$ and set
\[
Ug_0=\{ug_0\mid u\in U\},
\]
where $U=\Xi(B)$ is the neighborhood constructed in Step~3. For each $y\in\pi_x^{-1}(g_0)$, define $S_y:Ug_0\longrightarrow\mathcal L_x$ by
\[
S_y(ug_0)=\widehat{\Phi}_{z_s}^{\,s}\circ\cdots\circ\widehat{\Phi}_{z_1}^{\,1}(y), \qquad u=\Xi(z).
\]
Since $\Xi:B\to U$ is biholomorphic, the map $S_y$ is well defined and holomorphic. By \eqref{productflow},
\begin{equation}\label{project}
    \pi_x\circ S_y=\operatorname{id}_{Ug_0}.
\end{equation}
Since the tangent map is $(\pi_x)_*\circ (S_y)_*=\operatorname{id}$, we obtain $(S_y)_*$ is an isomorphism. Using the inverse function theorem, $S_y$ is locally biholomorphic. Hence we just pick $U$ sufficiently small and  $\mathcal{U}_y=S_y(Ug_0)$.

We verify that $S_y(Ug_0)$ are pairwise disjoint for $y \in \pi^{-1}_{x}(g_0)$. Assume that $S_y(Ug_0) \cap S_{y'}(Ug_0)$ is nonempty for some $y,y' \in \pi^{-1}_{x}(g_0)$. Then there exists $u,u' \in U$, such that $S_y(ug_0)=S_{y'}(u' g_0)$. It follows from \eqref{project} that $u=u'= \Xi(z)$ for some $z\in B$. Write $T_z=\widehat{\Phi}_{z_s}^s \circ \cdots \circ \widehat{\Phi}_{z_1}^1$, where all these flows are diffeomorphisms. Then $T_z(y)=T_z(y')$, which implies that $y=y'$. 

It remains to show 
\[
\pi^{-1}_x(Ug_0)=\bigcup_{y\in\pi^{-1}_{x}(g_0)} S_y(Ug_0).
\]
Take $a \in \pi^{-1}_{x}(Ug_0)$ and then $\pi_x (a) \in Ug_0$. It follows that there exists $u= \Xi(z)$ such that $\pi_x(a)=\Xi(z)g_0$. Pick $b=T_z^{-1}(a)$. Then $b \in \mathcal{L}_x$ since the flow preserves the integral leaf. Moreover, $\pi_x(b)=g_0$. Hence
\[
a=T_z(b) \in S_b(Ug_0) \subset \bigcup_{y\in\pi^{-1}_{x}(g_0)} S_y(Ug_0).
\]

\textbf{Step 5:} Construct the desired holomorphic left action.

Define
\[
L: G\times M \to M, \qquad (g,L(g,x))=\pi_x^{-1}(g).
\]
Then $L$ is holomorphic since the projection maps are both holomorphic. We now verify that $L$ gives a left action.

For any fixed $h \in G$, define
\[
\mathcal{R}_h: G\times M \to G\times M, \qquad \mathcal{R}_h(g,y)=(gh,y).
\]
Note that $\mathcal{R}_h$ preserves $\mathcal{D}$ since $\mathcal{D}$ is defined by the right-invariant vector fields. Denote $q=L(h,x)$, and then $(h,q) \in \mathcal{L}_x$. On the other hand, $\mathcal{R}_h(\mathcal{L}_q)$ is the maximal integral leaf through $(h,q)$ since $\mathcal{R}_h(e,q)=(h,q)$. It follows that $\mathcal{R}_h(\mathcal{L}_q)=\mathcal{L}_x$. Pick $(g,L(g,q)) \in \mathcal{L}_q$. We have
\[
\mathcal{R}_h(g,L(g,q))=(gh,L(g,q)) \in \mathcal{L}_x.
\]
Hence
\[
L(gh,x)=L(g,q)=L(g,L(h,x)).
\]
We conclude that $g\cdot x:=L(g,x)$ is a left action. Note that $\mathcal{D}_{(\mathbf{1}_G,x)}=\{(Z,-Z(x))\ | \ \forall Z\in \mathfrak{g}\}$ by \ref{complex-distribution}. Hence
\[
\left.\frac{d}{dt}\right|_{t=0}L(\exp(tZ),x)= L_{* (\mathbf{1}_G)}\left(\left.\frac{d}{dt}\right|_{t=0}\exp(tZ),0\right) =L_{* (\mathbf{1}_G)}(Z,0) =-Z(x).
\]
    
\end{proof}

\begin{proof}[Proof of Theorem \ref{thm:high-dimension_part}]
    Let $(M^n,g)$ be a simply connected complete Chern-flat Hermitian manifold with $\{E_1,\ldots,E_n\}$ a global holomorphic frame. If there are holomorphic functions with polynomial growth which give local coordinates near a point, then $\{E_1,\ldots,E_n\}$ generates a nilpotent Lie algebra $\mathfrak{g}$ with finite dimensions by Propsition \ref{prop:nilpotent-brackets}. Let $G$ be the simply connected nilpotent complex Lie group with its Lie algebra $\mathfrak{g}$. Then Lemma \ref{Palais} gives a left action $L: G\times M \to M$. Fix $x \in M$ and consider the orbit
    \[
    G_x=\{L(g,x)\ | \ g\in G \}.
    \]
    Denote 
    \[
    F_x:G \to M, \qquad F_x(g)=L(g,x).
    \]
    Then 
    \[
    (F_x)_{*\mathbf{1}_G}(E_j)=-E_j(x), \qquad \forall j=1,2,\ldots,n.
    \]
    Hence $(F_x)_{*\mathbf{1}_G}$ is surjective. It follows from the implicit function theorem that there exists a neighborhood of $\mathbf{1}_G$ in $G$, whose image under $F_x$ contains an open neighborhood $U_x$ centered at $x$. That is to say, $x \in U_x \subset G_x$. The above argument holds for any $y \in G_x$. Hence 
    \[
    G_x=\bigcup_{y\in G_x} U_y, 
    \]
    which is open in $M$. Thus $G_x=M$ and $M$ is actually a homogeneous manifold. By the standard theory of homogeneous complex manifolds, see \cite[p. 8]{Akhiezer_B} for example, $M$ is biholomorphic to $G/H$, where $H$ is a closed subgroup of $G$. Moreover, the holomorphic fibration $H \to G \to G/H$ induces a homotopy exact sequence
    \[
    \to \pi_1(G) \to \pi_1(M) \to \pi_0(H) \to \pi_0(G) \to.
    \]
    Then $H$ is connected since both $\pi_1(M)$ and $\pi_0(G)$ are trivial. We conclude that $M \cong G/H$ is biholomorphic to $\mathbb{C}^n$ by a result of Matsushima \cite[Lemma 3.1]{Ma1961}.
\end{proof}

\begin{example}\label{ex:group_act}
We return to the example of a Chern-flat Hermitian metric discussed in Remark \ref{rem:no-isometric-conclusion}. Let $(M, g)$ be a complete Chern-flat metric on $\mathbb{C}^3$ defined by the dual frame \eqref{eta_xi}. By \cite[Proposition 5.4]{MWY2026}, it satisfies Property (H). In order to illustrate the proof of Theorem \ref{thm:high-dimension_part}, we show that $M$ can be realized as a complex homogeneous space $H \backslash G$ in an explicit way. Here we adopt the notation of the right action of $G$ on $M$ as it meshes nicely with the left invariant frame on $G$. Of course, it is a matter of notation to rewrite the proofs of Lemma \ref{Palais} and Theorem \ref{thm:high-dimension_part} using a right action.  
\end{example}

First we solve the corresponding global unitary holomorphic frame on $M$ as follows. 
\begin{equation}
  E_1=\frac{\partial}{\partial z_1}+\eta \frac{\partial}{\partial z_3},\ \ \  E_2=\frac{\partial}{\partial z_2}+\xi \frac{\partial}{\partial z_3}, \ \ \ E_3=\frac{\partial}{\partial z_3}. 
  \label{f_global_frame}    
\end{equation}
Here we choose $\eta=0$ and $\xi=\frac{1}{6}z_1^3$. Then the complex Lie algebra $\mathfrak{g}$ generated by $E_1, E_2$, and $E_3$ is a $5$-dimensional filiform algebra of the first kind in the notation of \cite[Example 9.1.7]{LeDonne_B}. Its basis can be expressed as
\begin{equation}
  X_1=\frac{\partial}{\partial z_1},\ \ \  X_2=\frac{\partial}{\partial z_2}+\frac{1}{6}z_1^3 \frac{\partial}{\partial z_3}, \ \ \ X_3=\frac{1}{2}z_1^2\frac{\partial}{\partial z_3},\ \ \  X_4=z_1\frac{\partial}{\partial z_3},\ \ \ X_5=\frac{\partial}{\partial z_3}. 
  \label{f_frame_basis}    
\end{equation} with the only nontrivial relations as follows
\[
[X_1, X_2]=X_3,\ \ \ \  [X_1, X_3]=X_4,\ \ \ \  [X_1, X_4]=X_5
\] besides the corresponding commutation ones. In other words, $\operatorname{ad}_{X_1}$ acts on $\operatorname{span}\{X_2, X_3, X_4, X_5\}$ which is a $4$-dimensional abelian ideal in $\mathfrak{g}$ by
\[
\operatorname{ad}_{X_1}(X_i)=X_{i+1},\ \ \text{for}\ 2 \leq i \leq 4,\ \ \ \ \ \ \operatorname{ad}_{X_1}(X_5)=0.
\]
Therefore, the simply-connected complex Lie group $G$ associated with $\mathfrak{g}$ can be written as $\mathbb{C} \ltimes_{\varphi} \mathbb{C}^4$, with the group multiplication defined by
\begin{align*}
&(a, b, c, d, e) \cdot (\widetilde{a}, \widetilde{b}, \widetilde{c}, \widetilde{d}, \widetilde{e})
=\Bigl(a+\widetilde{a}, (b, c, d, e)+\varphi(a)(\widetilde{b}, \widetilde{c}, \widetilde{d}, \widetilde{e})\Bigr),\ \ \text{where}\\
& \varphi(a)(\widetilde{b}, \widetilde{c}, \widetilde{d}, \widetilde{e})=(\widetilde{b}, \widetilde{c}+a\widetilde{b}, \widetilde{d}+a\widetilde{c}+\frac{a^2}{2}\widetilde{b}, \widetilde{e}+a\widetilde{d}+\frac{a^2}{2}\widetilde{c}+\frac{a^3}{6}\widetilde{b}).
\end{align*}

Now we define a natural $G$-action on $M$ by
\begin{equation}
(z_1, z_2, z_3) \cdot (a, b, c, d, e)=(z_1+a, z_2+b, z_3+\frac{z_1^3}{6}b+\frac{z_1^2}{2}c+dz_1+e).
\label{G_right_action}    
\end{equation} We may check that it is a right action with the infinitesimal generators being exactly \eqref{f_frame_basis}. For example, we solve
\(
\left.\frac{d}{dt}\right|_{t=0} (z_1, z_2, z_3) \cdot (0, 0, t, 0, 0)=X_3.
\)
Moreover, $G$ acts transitively on $M$. Let $H$ be the stabilizer group at $(0, 0, 0)$. Then \(H=\{(0, 0, c, d, 0) \in G\ |\ c, d \in \mathbb{C}\}\) is the abelian group $\mathbb{C}^2$. The corresponding Lie algebra $\mathfrak{h}$ is $\operatorname{span}\{X_3, X_4\}$. We get that $M$ is biholomorphic to $H \backslash G$, Moreover, in this particular example, we may express the correspondence more explicitly in the spirit of \cite[Lemma 3.1]{Ma1961}. Note that any element of $g=(a, b, c, d, e)$ can be uniquely expressed as
\begin{equation}
    g=h\,\exp(eX_5)\,\exp(bX_2)\,\exp(aX_1),\ \ \text{where}\ h=\exp(cX_3+dX_4)=(0, 0, c, d, 0) \in H.  \label{g_formula}
\end{equation}
Therefore, we have an explicit biholomorphic map from $F: H \backslash G \rightarrow \mathbb{C}^3$ by
\[
(z_1, z_2, z_3)=F([g])=(a, b, e)\ \ \ \text{where}\ g \text{ is defined in }\eqref{g_formula} \text{ and }[g] \in H \backslash G.
\]

\bibliographystyle{amsplain}

\bibliography{chern_flat}

\end{document}